\newtheorem{theorem}{\indent\sc Theorem}[section]
\newtheorem{thmx}{Theorem}
\newtheorem{lemma}[theorem]{\indent\sc Lemma}
\newtheorem{proposition}[theorem]{\indent\sc Proposition}
\theoremstyle{definition} 
\newtheorem{definition}[theorem]{\indent\sc Definition}
\newtheorem{remark}[theorem]{\indent\sc Remark}
\numberwithin{equation}{section}
\begin{document}
\title[{Relative RCC threefold char. $p$}]{On relative rational chain connectedness of threefolds with anti-big canonical divisors in positive characteristics}
\author{Yuan Wang}
\subjclass[2010]{ 
Primary 14M22; Secondary 14E30.
}
%
\keywords{ 
Rational chain connectedness, positive characteristic, minimal model program, weak positivity, canonical bundle formula.
}
\address{Department of Mathematics  \endgraf
University of Utah  \endgraf 
155 South 1400 East  \endgraf 
Salt Lake City, UT 84112-0090  \endgraf
USA}
\email{ywang@math.utah.edu}
\thanks{The author was supported in part by the FRG grant DMS-\#1265261.}
\begin{abstract}
In this paper we prove two results about the rational chain connectedness for klt threefolds with anti-big canonical divisors in the relative setting.
\end{abstract}
\maketitle
\section{Introduction}
It is widely recognized that the geometry of a higher-dimensional variety is closely related to the geometry of rational curves on it. A classical result in the early 90s by Campana (\cite{Campana92}) and Koll\'{a}r-Miyaoka-Mori (\cite{KMM92}) says that smooth Fano varieties are rationally connected in characteristic zero and are rationally chain connected in positive characteristics. This was later generalized by Zhang (\cite{Zhang06}) and Hacon-McKernan (\cite{HM07}) in characteristic zero. More recently using the minimal model program by Hacon-Xu (\cite{HX15}) and Birkar (\cite{Birkar13}), Gongyo-Li-Patakfalvi-Schwede-Tanaka-Zong (\cite{GLPSTZ15}) proved that projective globally $F$-regular threefolds in characteristic $\ge 11$ are rationally chain connected and this was later generalized to log Fano type threefold by Gongyo-Nakamura-Tanaka (\cite{GNT15}). 

The main result in \cite{HM07} is as follows.
\begin{theorem}\cite[Theorem 1.2]{HM07}\label{Shokurov RCC}
Let $(X,\Delta)$ be a log pair, and let $f:X\to S$ be a proper morphism such that $-K_X$ is relatively big and $-(K_X+\Delta)$ is relatively semiample. Let $g:Y\to X$ be any birational morphism. Then the connected components of every fiber of $f\circ g$ are rationally chain connected modulo the inverse image of the locus of log canonical singularities of $(X,\Delta)$.
\end{theorem}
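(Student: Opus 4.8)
The plan is to argue by induction on $\dim X$, after a sequence of reductions, using the semiample fibration attached to $-(K_X+\Delta)$ together with the canonical bundle formula. First I would reduce to the case $Y=X$ and $S$ a point: rational chain connectedness is tested on a single connected component of a single fiber of $f$, so I restrict to such a component. To dispose of the birational morphism $g$, I would use that the fibers of $g$ are themselves rationally chain connected (the exceptional locus of a proper birational morphism to a variety with log canonical singularities is covered by rational curves with chain-connected fibers); combining this with rational chain connectedness of the fibers of $f$ through the fibration principle below yields the statement for $f\circ g$ modulo the preimage of the non-lc locus. Finally I would replace $(X,\Delta)$ by a $\mathbb{Q}$-factorial dlt modification, tracking the non-lc locus, so that I may assume $(X,\Delta)$ is klt away from a locus contained in (the preimage of) the non-lc locus; the general points that must be connected avoid this locus.

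Since $-(K_X+\Delta)$ is semiample, after passing to a multiple it defines a contraction $\pi\colon X\to T$ with $-(K_X+\Delta)\sim_{\mathbb{Q}}\pi^{*}H$ for an ample $\mathbb{Q}$-divisor $H$ on $T$. On a general fiber $F$ of $\pi$ one has $(K_X+\Delta)|_F\sim_{\mathbb{Q}}0$, and since $-K_X$ is big its restriction $-K_F=(-K_X)|_F$ is again big (bigness of a divisor restricts to bigness on the general fiber of a fibration, by the usual growth estimate for $h^0$). Hence $\Delta|_F\sim_{\mathbb{Q}}-K_F$ is big, and $(F,\Delta|_F)$ satisfies exactly the hypotheses of the theorem over a point. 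If $\dim F<\dim X$, the induction hypothesis shows $F$ is rationally chain connected modulo its non-lc locus; the remaining case $\dim F=\dim X$ is precisely the situation where $-(K_X+\Delta)$ is nef and big, i.e. $(X,\Delta)$ is weak log Fano, which I would treat separately by running a $(K_X+\Delta)$-MMP and invoking the known log Fano rational chain connectedness result (Zhang).

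Next I would apply the canonical bundle formula to $\pi\colon(X,\Delta)\to T$: since the general fibers are of Fano type with $(K_X+\Delta)|_F\sim_{\mathbb{Q}}0$, one may write
\[ K_X+\Delta\sim_{\mathbb{Q}}\pi^{*}(K_T+B_T+M_T), \]
with $B_T$ the discriminant and $M_T$ the moduli part, the latter \emph{nef} by Kawamata's positivity theorem (weak positivity of the pushforward of the relative dualizing sheaf). Comparing with $K_X+\Delta\sim_{\mathbb{Q}}-\pi^{*}H$ and pushing down gives $-(K_T+B_T)\sim_{\mathbb{Q}}H+M_T$, which is ample plus nef, hence big and nef; in particular $-(K_T+B_T)$ is semiample and $-K_T=(H+M_T)+B_T$ is big. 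Thus $(T,B_T)$ again satisfies the hypotheses of the theorem over a point, with $\dim T<\dim X$, so by induction $T$ is rationally chain connected modulo the non-lc locus of $(T,B_T)$.

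Finally I would combine the two conclusions by the fibration principle: if $\pi\colon X\to T$ has rationally chain connected general fibers and $T$ is rationally chain connected, then so is $X$ — connect the images of two general points by a chain of rational curves in $T$, then lift the chain fiber by fiber, using chain-connectedness of the fibers to pass through the nodes. The hard part will be the bookkeeping of the non-lc locus through this descent: I must verify that both the non-lc locus of $(T,B_T)$ and the locus over which the fibers of $\pi$ degenerate or fail to be klt pull back into the non-lc locus of $(X,\Delta)$, so that only points lying over the non-lc locus are discarded. This is exactly where the precise form of the canonical bundle formula — the correspondence between log canonical places upstairs and the support of the discriminant $B_T$ downstairs — and the nefness of the moduli part $M_T$ are indispensable. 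The weak log Fano base case and the claim that the fibers of a birational morphism are rationally chain connected are the remaining two inputs, which require independent but parallel arguments.
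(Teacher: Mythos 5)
This statement is quoted verbatim from \cite[Theorem 1.2]{HM07} and the paper offers no proof of it, so there is nothing internal to compare your argument against; I can only judge the proposal on its own terms and against Hacon--McKernan's actual argument. Their route is different from yours: they first prove the ``relatively nef and big'' version (their Theorem 1.1) by running a suitable MMP and inducting through Mori fibre spaces, and then deduce the semiample statement by perturbing the boundary --- writing $-K_X\sim_{\mathbb{Q}}A+E$ with $A$ relatively ample and $E\ge 0$, and replacing $\Delta$ by $(1-\delta)\Delta+\delta E$, which makes $-(K_X+\Delta')$ relatively ample while controlling the non-klt locus. The canonical bundle formula plays no role there.

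Your proposal has several genuine gaps. First, the two degenerate cases of the semiample fibration $\pi\colon X\to T$ are interchanged: $\dim F=\dim X$ means $T$ is a point, i.e.\ $K_X+\Delta\sim_{\mathbb{Q}}0$ with $\Delta\sim_{\mathbb{Q}}-K_X$ big --- not the weak log Fano case; the nef-and-big situation is the opposite extreme $\dim F=0$. Both extremes break your dimension induction, and the $K_X+\Delta\sim_{\mathbb{Q}}0$ case is where the essential content lies. Second, the treatment of the birational morphism $g$ is circular: the assertion that the fibres of a proper birational morphism over the klt locus are rationally chain connected is itself a well-known consequence of the very theorem you are proving (apply it to $g$ with $S=X$), not an available input; it must be handled directly, e.g.\ by a relative MMP as in Proposition \ref{MMP} of the present paper. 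Third, for the induction on the base you need $-(K_T+B_T)$ semiample, but you only obtain nef and big; upgrading nef and big to semiample requires a basepoint-free theorem for $(T,B_T)$, which need not be klt. Finally, the ``fibration principle'' for rational chain connectedness requires lifting chains of rational curves from $T$ to $X$, which rests on Graber--Harris--Starr-type section theorems for rationally connected (not merely chain connected) fibres, exactly as in Proposition \ref{RCCupstair}. Each of these points can be repaired, but as written the induction does not close.
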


In this paper we prove a theorem similar to Theorem \ref{Shokurov RCC} for morphisms from a klt threefold to a variety of dimension $\ge 1$. More precisely, we have
\begin{thmx}[Theorem \ref{relativeRCC}] \label{1}
Let $X$ be a normal $\mathbb{Q}$-factorial threefold over an algebraically closed field $k$ of characteristic $\ge 7$ and $(X,D)$ a klt pair. Let $f: X\to Z$ be a proper morphism such that $f_*\mathcal{O}_X=\mathcal{O}_Z$, ${\rm dim}(Z)=1$ or $2$, $Z$ is klt, $-K_X$ is relatively big, $-(K_X+D)$ is relatively semi-ample and $(X_z,D_z)$ is klt for general $z\in Z$. Let $g:Y\to X$ be any birational morphism. Then the connected components of every fiber of $f\circ g$ are rationally chain connected.
\end{thmx}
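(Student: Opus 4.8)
The plan is to reproduce the architecture of Theorem~\ref{Shokurov RCC}, but to feed it the positive-characteristic inputs that have become available in dimension three: the minimal model program of Hacon--Xu and Birkar, resolution of singularities for threefolds, Mori's bend-and-break (which is a characteristic-$p$ technique to begin with), and a canonical bundle formula with weak positivity of the moduli part. I would argue by induction on the relative dimension $\dim X-\dim Z\in\{1,2\}$.

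First I would dispose of the birational morphism $g$. Since $X$ is normal and $g$ is proper birational, $g_*\mathcal{O}_Y=\mathcal{O}_X$, so $g$ has connected fibers and the connected components of $(f\circ g)^{-1}(z)=g^{-1}(X_z)$ correspond bijectively to those of $X_z$. It therefore suffices to prove (i) the connected components of each fiber $X_z$ of $f$ are rationally chain connected, and (ii) every fiber of $g$ over a closed point of $X$ is rationally chain connected. Granting these, any two points of a component of $g^{-1}(X_z)$ are joined by a chain: connect their images in $X_z$ by a rational chain, lift each rational curve to its strict transform in $Y$ (still rational), and bridge the gaps inside the rationally chain connected $g$-fibers lying over the nodes of the chain. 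Point (ii) asserts that the fibers of a birational morphism over a $\mathbb{Q}$-factorial klt threefold are rationally chain connected; for klt, hence rational, threefold singularities in characteristic $\ge 7$ this follows from resolution together with the low-dimensional case, and I would isolate it as a preliminary lemma.

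For (i) the general fibers are easy: for general $z$ the pair $(X_z,D_z)$ is klt of dimension $\le 2$ with $-K_{X_z}$ big and $-(K_{X_z}+D_z)$ semiample, hence of log Fano type, and low-dimensional log Fano type varieties are rational (a Fano-type curve is $\mathbb{P}^1$, a log del Pezzo surface is rational), so they are rationally chain connected by \cite{GNT15}. The content is in the special fibers, where $(X_z,D_z)$ need not even be klt. To reach them I would pass to the relative ample model of the semiample divisor $-(K_X+D)$ over $Z$, obtaining a contraction $\pi:X\to V$ over $Z$ with $-(K_X+D)\sim_{\mathbb{Q}}\pi^*A$ for some $A$ ample over $Z$; in particular $K_X+D\sim_{\mathbb{Q},V}0$. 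If $\pi$ is birational then $-(K_X+D)$ is already relatively big and I would conclude by running a relative $K_X$-MMP and producing the required rational curves by bend-and-break. If $\dim V>\dim Z$, then $\pi$ is a genuine fibration and I would apply the positive-characteristic canonical bundle formula to $\pi$: from $K_X+D\sim_{\mathbb{Q},V}0$ one gets $K_X+D\sim_{\mathbb{Q}}\pi^*(K_V+B_V+M_V)$ with $B_V$ the discriminant and $M_V$ the moduli part, whence $-(K_V+B_V+M_V)\sim_{\mathbb{Q}}A$ is semiample over $Z$, while weak positivity of $M_V$ (pseudoeffective), effectivity of $B_V$, and the relative bigness of $-K_X$ give that $-K_V=A+B_V+M_V$ is big over $Z$. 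Thus $V\to Z$ again has relatively log Fano type fibers, of strictly smaller dimension, to which the inductive hypothesis (or the known lower-dimensional cases) applies.

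The fibers of $\pi$ themselves satisfy $K_X+D\sim_{\mathbb{Q}}0$ with $-K_X$ big, so after perturbing the boundary they are log Fano type of dimension $\le 2$ and hence rationally connected; combining the inductively treated base $V_z$ with these rationally connected $\pi$-fibers yields that $X_z$ is rationally chain connected, closing the induction. The main obstacle, and the place where characteristic $p$ really bites, is the canonical bundle formula step: one must know that the moduli part $M_V$ is weakly positive and that the discriminant $B_V$ keeps $(V,B_V)$ controlled, all without Kawamata--Viehweg vanishing or generic smoothness. This is exactly why the hypotheses restrict to threefolds, where the fibers have dimension $\le 2$ and the requisite semipositivity of direct images and canonical bundle formula are available, and to characteristic $\ge 7$, and why the klt condition on the general fiber must be assumed rather than deduced; controlling the non-klt special fibers and legitimizing the reassembly of $X_z$ from $V_z$ and the $\pi$-fibers are the steps that will require the most care.
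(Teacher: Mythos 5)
Your proposal diverges from the paper's argument and has two genuine gaps. First, your point (ii) --- that every fiber of a proper birational morphism onto a $\mathbb{Q}$-factorial klt threefold is rationally chain connected --- is exactly the hard content of the reduction, and ``resolution together with the low-dimensional case'' is not a proof of it. The paper establishes this (Proposition \ref{MMP}, following \cite[Proposition 3.6]{GLPSTZ15}) by running a relative $(K_W+E)$-MMP over $X$ and checking, step by step, that each divisorial contraction extracts only a rational surface (via adjunction, strong $F$-regularity of the adjoint pair, and \cite[Lemma 2.2]{Kawamata94}) or a family of $\mathbb{P}^1$'s (via \cite[Theorem 5.3]{Tanaka12}), and that flipping curves are $\mathbb{P}^1$'s; this is where the characteristic $\ge 7$ MMP and the surface theory in characteristic $p$ are actually consumed, and it cannot be waved away. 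Second, the canonical bundle formula you invoke for the ample model $\pi:X\to V$ of $-(K_X+D)$ is not available in positive characteristic in the generality you need: weak positivity of the moduli part is precisely the problematic ingredient, and the paper's own version (Theorem \ref{CanBund}) only covers $\mathbb{P}^1$-fibrations from threefolds to surfaces under the extra hypothesis $p>2/\delta$, which is not implied by the hypotheses of this theorem. You flag this as ``the place where characteristic $p$ really bites'' but do not resolve it, so the induction does not close. Relatedly, your claim that the special fibers of $\pi$ are log Fano of dimension $\le 2$ does not follow: relative bigness of $-K_X$ over $Z$ controls general fibers only.

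The paper avoids all of this. For the special fibers it uses two much more elementary devices that your proposal misses entirely: when $\dim Z=1$ it proves the \emph{general} fiber is a rational surface (via the ruled-surface structure and Lemma \ref{basecurverational}, which rests on Patakfalvi's relative semipositivity rather than a canonical bundle formula) and then invokes \cite[Ch.~IV Corollary 3.5.2]{Kollar96}, i.e.\ that over an uncountable field RCC-ness of fibers is a closed condition, to get every fiber; when $\dim Z=2$ it runs a relative $K_X$-MMP to a Mori fiber space $X'\to Z'$, proves $h$ is equidimensional using $\rho(X'/Z')=1$, and applies \cite[Lemma 3.7]{Debarre01} to see that every fiber component is a rational curve. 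No canonical bundle formula, no induction on relative dimension, and no appeal to \cite{GNT15} is needed. I would recommend replacing the ample-model/canonical-bundle-formula scheme with these two arguments, and writing out the MMP analysis behind your point (ii) in full.
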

Motivated by Theorem \ref{1}, we construct a global version of rational chain connectedness for threefolds. 
\begin{thmx}[Theorem \ref{globalRCC}]\label{globalRCCB}
Let $X$ be a projective threefold over an algebraically closed field $k$ of characteristic $p>0$, $f:X\to Y$ a projective surjective morphism from $X$ to a projective variety $Y$ such that $f_*\mathcal{O}_X=\mathcal{O}_Y$. Let $D$ be an effective $\mathbb{Q}$-divisor, and $X_{\overline{\eta}}$ the geometric generic fiber of $f$. Assume that the following conditions hold. 
\begin{enumerate}
\item $(X,D)$ is klt, $-K_X$ is big and $f$-ample, $K_X+D\sim_{\mathbb{Q}}0$ and the general fibers of $f$ are smooth.
\item $\displaystyle p> \frac{2}{\delta}$, where $\delta$ is the minmum non-zero coefficient of $D$.
\item $D=E+f^*L$ where $E$ is an effective $\mathbb{Q}$-Cartier divisor such that $p\nmid {\rm ind}(E)$, $(X_{\overline{\eta}},E|_{X_{\overline{\eta}}})$ is globally $F$-split, and $L$ is a big $\mathbb{Q}$-divisor on $Y$. \label{Fsplitcondition}
\item ${\rm dim}(Y)=1$ or $2$.
\end{enumerate}
Then $X$ is rationally chain connected.
\end{thmx}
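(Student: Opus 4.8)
The plan is to deduce rational chain connectedness of $X$ from two intermediate facts — that every fibre of $f$ is rationally chain connected, and that the base $Y$ is rationally chain connected — and then to glue these together. The fibres are handled by the relative result. First I would verify the hypotheses of Theorem \ref{1} for $f\colon X\to Y$. Since $(X,D)$ is klt with $K_X+D\sim_{\mathbb{Q}}0$, the canonical bundle formula (subadjunction) produces an effective boundary $B$ on $Y$ with $(Y,B)$ klt, so in particular $Y$ is klt; condition (1) gives that $-K_X$ is relatively big, while $K_X+D\sim_{\mathbb{Q}}0$ makes $-(K_X+D)$ relatively semi-ample, and $(X_z,D_z)$ is klt for general $z$ because the general fibres of $f$ are smooth. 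Applying Theorem \ref{1} with $g=\mathrm{id}$, and using $f_*\mathcal{O}_X=\mathcal{O}_Y$ to see that the fibres are connected, shows that every fibre of $f$ is rationally chain connected.

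The heart of the matter is showing that $Y$ itself is rationally chain connected. Here I would apply the canonical bundle formula to the relatively log Calabi--Yau pair $(X,E)$ over $Y$: since $K_X+E\sim_{\mathbb{Q}}-f^*L$ is relatively trivial, one obtains
\[
K_X+E\sim_{\mathbb{Q}}f^*(K_Y+B_E+M_E),
\]
with $B_E$ the discriminant and $M_E$ the moduli part, whence (using $f_*\mathcal{O}_X=\mathcal{O}_Y$)
\[
-(K_Y+B_E)\sim_{\mathbb{Q}}M_E+L.
\]
The crucial and most delicate point is that $M_E$ is nef (or at least weakly positive). In characteristic zero this is the standard nefness of the moduli part, but in positive characteristic it can fail, and this is exactly what conditions \eqref{Fsplitcondition} and $p>2/\delta$ are engineered to supply: the global $F$-splitness of $(X_{\overline{\eta}},E|_{X_{\overline{\eta}}})$ together with the bound on $p$ feeds the $F$-singularity and weak-positivity machinery for the moduli part of the canonical bundle formula. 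Granting that $M_E$ is nef and that $L$ is big, the class $-(K_Y+B_E)=M_E+L$ is big, and by absorbing the ample part of $L$ one produces a klt boundary $\Gamma\ge B_E$ with $-(K_Y+\Gamma)$ ample; thus $Y$ is of log Fano type. Since $\dim Y\in\{1,2\}$, a log Fano type curve is $\mathbb{P}^1$ and a log Fano type surface is rational, so $Y$ is rationally chain connected.

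Finally I would glue. With $Y$ rationally chain connected and every fibre of the proper surjective morphism $f$ rationally chain connected, $X$ is rationally chain connected: passing to the maximal rationally chain connected fibration $\pi\colon X\dashrightarrow W$, the fibres of $f$, being rationally chain connected, are contracted by $\pi$, so $\pi$ factors through $f$; over each rational curve in $Y$ the preimage is a family with rationally chain connected fibres over $\mathbb{P}^1$ and hence rationally chain connected, so the rational chains witnessing the connectedness of $Y$ lift to chains in $X$, forcing $W$ to be a point. I expect the main obstacle to be precisely the positivity of $M_E$ in positive characteristic — establishing nefness (weak positivity) of the moduli part from the $F$-splitness of the geometric generic fibre and the bound $p>2/\delta$. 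Once that input is secured, the canonical bundle formula, the log Fano type conclusion for $Y$, and the gluing step are comparatively formal.
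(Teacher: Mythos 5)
Your skeleton (show the base is RCC, show the fibres are covered by rational curves, glue) matches the paper's, but two of your steps have genuine problems. First, the opening appeal to Theorem \ref{1} to prove that \emph{every} fibre of $f$ is RCC is both illegitimate and unnecessary: Theorem \ref{1} assumes $X$ is $\mathbb{Q}$-factorial and ${\rm char}(k)\ge 7$ because it runs the threefold MMP, whereas Theorem \ref{globalRCC} is stated for arbitrary $p>0$ (subject only to $p>2/\delta$ and $p\nmid{\rm ind}(E)$) and is explicitly advertised as \emph{not} relying on the MMP. The paper only ever uses the \emph{general} fibres --- smooth del Pezzo surfaces when $\dim Y=1$, copies of $\mathbb{P}^1$ when $\dim Y=2$ --- and glues via Proposition \ref{RCCupstair}, which produces sections over the normalized rational curves of a chain in $Y$ using de Jong--Starr. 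Your gluing paragraph glosses over exactly this point: ``the preimage is a family with RCC fibres over $\mathbb{P}^1$ and hence RCC'' is not a formal implication (rational chain connectedness does not ascend through fibrations in general); what makes it work is separable rational connectedness of the general fibre, which yields the sections of \cite{dJS03}.

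Second, and more seriously, the step you yourself flag as the crux --- nefness or weak positivity of the moduli part $M_E$ of the canonical bundle formula for $(X,E)$ over $Y$ --- is left as ``granting that,'' and it is not what the paper establishes. The paper splits the required positivity into two independent ingredients. (i) Bigness of $-K_Y$ (Lemma \ref{-KYample}) is obtained by applying Ejiri's weak positivity theorem directly to $f_*\mathcal{O}_X(am(K_{X/Y}+E))\cong\mathcal{O}_Y(-am(K_Y+L))$; this is where global $F$-splitness of $(X_{\overline{\eta}},E|_{X_{\overline{\eta}}})$ and $p\nmid{\rm ind}(E)$ enter, and no statement about the moduli part of a canonical bundle formula is needed. (ii) When $\dim Y=2$, Theorem \ref{CanBund} applied to $(X,D)$ (not $(X,E)$) gives an effective $M$ with $K_Y+M\sim_{\mathbb{Q}}0$ and $(Y,M)$ klt; its proof of effectivity of $D_{\rm mod}$ goes through the explicit geometry of $\overline{\mathcal{M}}_{0,n}$ and the semi-ampleness of the divisor $\mathcal{L}$ there, with $p>2/\delta$ used only to keep the base changes to the multisections separable and tamely ramified --- not through any $F$-splitting input, and not yielding nefness of $D_{\rm mod}$ on $Y$ itself, only $\mathbb{Q}$-linear equivalence to an effective divisor. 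Rationality of $Y$ then follows from the surface classification argument of Case 2 of Theorem \ref{relativeRCC} (via Lemma \ref{basecurverational}), rather than from a blanket ``log Fano type surfaces are rational in characteristic $p$'' claim. Without an actual proof that $M_E$ is nef in positive characteristic, your route does not close; the paper's two-ingredient substitute is the real content of the argument.
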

Here ${\rm ind}(E)$ means the Cartier index of $E$. 

The main ingredients of the proofs of Theorem \ref{1} and Theorem \ref{globalRCCB} are the minimal model program constructed in \cite{HX15}, \cite{Birkar13} and \cite{GLPSTZ15}; some facts, especially Theorem 2.1, in \cite{GLPSTZ15}; some positivity results by Patakfalvi (\cite{Patakfalvi14}) and Ejiri (\cite{Ejiri15}); a canonical bundle formula constructed in Section \ref{CBFsection} in the spirit of the paper \cite{PS09} by Prokhorov and Shokurov. Note that the condition (\ref{Fsplitcondition}) in Theorem \ref{globalRCCB} is used in order to apply the result \cite[Theorem 1.1]{Ejiri15} of Ejiri to deduce that $-K_Y$ is big, and to apply Theorem \ref{CanBund} when $\dim Y=2$. This creates enough rational curves on $Y$. Note that by \cite[Example 3.4]{Ejiri15}, $(X_{\overline{\eta}},E|_{X_{\overline{\eta}}})$ being globally $F$-split is equivalent to $S^0(X_{\overline{\eta}},E|_{X_{\overline{\eta}}},\mathcal{O}_{X_{\overline{\eta}}})=H^0(X_{\overline{\eta}},\mathcal{O}_{X_{\overline{\eta}}})$.

Note that although the proof is independent, Theorem \ref{1} can be implied by Theorem 4.1 of the paper \cite{GNT15}, which was put on arXiv before this paper. 
The proof of \cite[Theorem 4.1]{GNT15} relies on the minimal model program in dimension $3$ in positive characteristic, which is only established in characteristic $\ge 7$ so far. On the other hand, Theorem \ref{globalRCCB} covers some cases in characteristic $<7$. In particular it does not rely on the minimal model program and is not implied by \cite[Theorem 4.1]{GNT15}.
\subsection*{Acknowledgements}
The author would like to express his gratitude to Christopher Hacon for suggesting this direction of research and a lot of valuable suggestions, comments, support and encouragement. He would like to thank Karl Schwede for answering many questions about $F$-singularities. He also thanks Omprokash Das, Honglu Fan and Zsolt Patakfalvi for helpful discussions. Finally he would like to thank the referee for many useful suggestions.
\section{Preliminaries}
We work over an algebraically closed field $k$ of characteristic $p>0$.
\subsection{Preliminaries on rational connected varieties and the minimal model program} 
\begin{definition}
For a variety $X$ and a $\mathbb{Q}$-Weil divisor on $X$ such that $K_X+\Delta$ is $\mathbb{Q}$-Cartier. Let $f:Y\to X$ be a log resolution of $(X,\Delta)$ and we write 
$$K_Y=f^*(K_X+\Delta)+\sum_ia_iE_i$$
where $E_i$ is a prime divisor. We say that $(X,\Delta)$ is 
\begin{itemize}
\item \emph{sub Kawamata log terminal} (\emph{sub-klt} for short) if $a_i>-1$ for any $i$.
\item \emph{Kawamata log terminal} (\emph{klt} for short) if $a_i>-1$ for any $i$ and $\Delta\ge 0$.
\item \emph{log canonical} if $a_i\ge -1$ for any $i$ and $\Delta\ge 0$.
\end{itemize}
\end{definition}
\begin{definition}\cite[IV.3.2]{Kollar96}
Suppose that $X$ is a variety over $k$.
\begin{enumerate}
\item We say that $X$ is \emph{rationally chain connected (RCC)} if there is a family of proper and connected algebraic curves $g:U\to Y$ whose geometric fibers have only rational components and there is a cycle morphism $u:U\to X$ such that $u^{(2)}:U\times_YU\to X\times_kX$ is dominant.
\item We say that $X$ is \emph{rationally connected (RC)} if (1) holds and moreover the geometric fibers of $g$ in (1) are irreducible.
\end{enumerate}
\begin{proposition} \label{MMP}
Let $X$ be a klt $\mathbb{Q}$-factorial threefold over an algebraically closed field $k$ and ${\rm char}(k)\ge 7$. Let $g: W\to X$ be a log resolution and assume that $K_W+E=g^*K_X+B$, where $E$ and $B$ are exceptional divisors and the coefficients in $E$ are all $1$. Then relative minimal model for $(W,E)$ over $X$ exists.  Denote this process by 
$$W=W_0 \overset{\text{$f_0$}}\dashrightarrow W_1\overset{\text{$f_1$}}\dashrightarrow ...\overset{\text{$f_{N-1}$}}\dashrightarrow W_N=W'.$$
Then we actually have $W'=X$. Moreover if we have a morphism $h:X\to Y$ such that every fiber of $h$ is RCC, then every fiber of $h\circ g$ is RCC.
\end{proposition}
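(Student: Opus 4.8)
The plan is to run a suitable relative minimal model program over $X$, identify its output with $X$ itself by the negativity lemma, and then propagate rational chain connectedness down the resulting tower of MMP steps.

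First I would record the geometry of the pair $(W,E)$. Since $g$ is a log resolution and $X$ is klt, writing $K_W = g^*K_X + \sum_i a_i E_i$ we have $a_i > -1$ for every $g$-exceptional prime divisor $E_i$; taking $E=\sum_i E_i$ (reduced, all coefficients $1$) gives $B = E + \sum_i a_i E_i = \sum_i (1+a_i)E_i \ge 0$, which is effective and $g$-exceptional, while $(W,E)$ is log smooth, hence $\mathbb{Q}$-factorial dlt. In particular $K_W+E = g^*K_X + B \equiv_X B \ge 0$. I would then run the $(K_W+E)$-MMP over $X$; its existence and termination in characteristic $\ge 7$ are furnished by the threefold MMP of Hacon--Xu, Birkar and \cite{GLPSTZ15}, producing the stated sequence $W=W_0\dashrightarrow\cdots\dashrightarrow W_N=W'$ with $K_{W'}+E'$ nef over $X$. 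To see $W'=X$, let $p'\colon W'\to X$ be the induced projective morphism and $B'$ the strict transform of $B$, still effective and $p'$-exceptional. Because $(p')^*K_X \equiv_X 0$ we get $K_{W'}+E'\equiv_X B'$, so the $p'$-exceptional divisor $B'$ is $p'$-nef; by the negativity lemma $-B'$ is effective, whence $B'=0$. Thus the MMP has contracted every $g$-exceptional divisor, so $p'$ is a birational morphism to the $\mathbb{Q}$-factorial variety $X$ contracting no divisor, and by purity of the exceptional locus over a $\mathbb{Q}$-factorial base $p'$ is an isomorphism, i.e. $W'=X$.

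For the final assertion the essential point is that the relative MMP over $X$ keeps every $W_i$ equipped with a projective morphism $p_i\colon W_i\to X$, and that each step $f_i$ is a $(K_{W_i}+E_i)$-negative extremal operation whose non-trivial fibers are rationally chain connected, being covered by the extremal rational curves (part of the cited positive-characteristic MMP, cf. Theorem 2.1 of \cite{GLPSTZ15}). I would then prove by descending induction on $i$, from $N$ down to $0$, that every fiber of $p_i$ is RCC, the base case $p_N=\mathrm{id}_X$ being trivial. For a divisorial contraction $f_i\colon W_i\to W_{i+1}$, which is a morphism over $X$, a fiber $p_i^{-1}(x)=f_i^{-1}(p_{i+1}^{-1}(x))$ surjects onto the RCC fiber $p_{i+1}^{-1}(x)$ with RCC fibers, and lifting a connecting chain by strict transforms of the rational curves and joining across the RCC contracted fibers shows $p_i^{-1}(x)$ is RCC. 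For a flip, factoring through the flipping and flipped contractions $W_i\to V_i\leftarrow W_{i+1}$ over $X$, one first sees each fiber of $V_i\to X$ is RCC as the (continuous, surjective) image of the RCC fiber $p_{i+1}^{-1}(x)$, and then repeats the same lifting argument along the small contraction $W_i\to V_i$. Once $g=p_0$ is known to have RCC fibers over $X$, the moreover follows at once: for $y\in Y$ the fiber $F=h^{-1}(y)$ is RCC by hypothesis, and $g^{-1}(F)\to F$ is a proper birational morphism with RCC fibers, so the same gluing yields $(h\circ g)^{-1}(y)=g^{-1}(F)$ RCC.

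I expect the main obstacle to be precisely this gluing step in positive characteristic: lifting a chain of rational curves through a birational contraction while controlling the members of the chain that lie inside the image of the exceptional locus, and, more generally, establishing that a proper birational morphism with RCC fibers over an RCC base has RCC total space. This is where I would lean on the cited structural results for fibers of extremal contractions and on the RCC descent statements of \cite{GLPSTZ15}, since in characteristic $p$ one cannot appeal to generic smoothness or the characteristic-zero maximal-RCC-fibration machinery to perform the lifting.
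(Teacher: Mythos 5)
Your treatment of the first two assertions coincides with the paper's: the relative $(K_W+E)$-MMP over $X$ exists by the characteristic $\ge 7$ threefold MMP, and $W'=X$ follows from the negativity lemma applied to the effective, nef-over-$X$, exceptional divisor $B'$ together with $\mathbb{Q}$-factoriality of $X$. The overall shape of your argument for the last assertion --- descending induction through the MMP steps, lifting chains of rational curves across the contracted loci --- is also the paper's, which follows \cite[Proposition 3.6]{GLPSTZ15}.

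The substantive weak point is your justification of the key input, namely that the positive-dimensional fibers of each step $f_i$ are RCC ``being covered by the extremal rational curves.'' That is not a proof, and as a mechanism it is insufficient: for a divisorial contraction to a point the relevant fiber is the entire exceptional surface $F$, and the cone theorem only guarantees that the extremal ray contains \emph{some} rational curve, not that every contracted curve is rational; moreover a surface swept out by rational curves need not be RCC (a ruled surface over a curve of positive genus is swept out by its rulings but is not RCC). This is exactly where the real work of the proof lies. The paper restricts to a component $F$ of $E_i$, uses adjunction to write $K_F+\Delta_F=(K_{W_i}+E_i-\frac{1}{n}(E_i-F))|_F$ with $\Delta_F$ having standard coefficients (\cite[Corollary 2.2.8]{Prokhorov99}) and $-(K_F+\Delta_F)$ ample, invokes \cite[Theorems 3.1 and 4.1]{HX15} to get that $F$ is a normal, $\mathbb{Q}$-factorial, strongly $F$-regular surface, and only then concludes: $F$ is a rational surface by \cite[Lemma 2.2]{Kawamata94} when contracted to a point, and the fibers over a curve, respectively the flipping curves, are $\mathbb{P}^1$'s by \cite[Theorem 5.3]{Tanaka12}. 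This is where ${\rm char}(k)\ge 7$ and the $F$-singularity machinery enter beyond mere existence of the MMP; the statement you actually need from \cite{GLPSTZ15} is their Proposition 3.6 rather than a cone-theorem-type input. Finally, your closing reduction to ``a proper birational morphism with RCC fibers over an RCC base has RCC total space'' should not be invoked as a general principle (the analogous fibration statement is not automatic in characteristic $p$, where one cannot freely produce multisections over the curves of a connecting chain); the paper sidesteps this by running the induction directly on the fibers of $W_i\to Y$, where the only gluing required is across the explicitly identified rational curves and rational surfaces produced in its three cases.
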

\begin{proof}
The existence of this minimal model program is by \cite[Theorem 3.2]{GLPSTZ15}. So we have a morphism $g':W'\to X$ and we want to show that $g'$ is the identity. Denote the strict transform of $E$ by $E'$, then $K_{W'}+E'=g'^*K_X+B'$ for some exceptional $\mathbb{Q}$-divisor $B'$. By construction of the minimal model program we know that $g'^*K_X+B'$ is nef over $X$ which means that $B'$ is $g'$-nef and since $X$ is klt the support of $B'$ is the whole exceptional locus of $g'$. So we can get that $B'=0$ by negativity lemma, and since $X$ is $\mathbb{Q}$-factorial we will get $W'=X$.

The proof of the last statement follows the proof of \cite[Proposition 3.6]{GLPSTZ15}. Without loss of generality we can do a base change and assume that the base field $k$ is uncountable. Define $F$ in the following way: if $f_i$ is a divisorial contraction, then let $E_0=E$, $E_{i+1}=f_{i,*}E_i$ and $F$ an arbitrary component of $E_i$; if $f_i$ is a flip and $C$ is any flipping curve then let $F$ be a component of $E_i$ that contains $C$. Let $K_F+\Delta_F:=(K_{W_i}+E_i-\frac{1}{n}(E_i-F))|_F$ where $n\gg 0$. By assumption $K_{W_i}+E_i-\frac{1}{n}(E_i-F)$ is plt, then by adjunction $K_F+\Delta_F$ is klt, hence by \cite[Theorem 14.4]{Tanaka12} $F$ is $\mathbb{Q}$-factorial. We also know that $-(K_{W_i}+E_i)$ is $f_i$-ample by assumption, then $-(K_F+\Delta_F)$ is ample. Moreover by \cite[Corollary 2.2.8]{Prokhorov99} the coefficients of $\Delta_F$ are in the standard set $\{1-\frac{1}{n}|n\in \mathbb{N}\}$. Let $\tilde{F}$ be the normalization of $F$. Then by \cite[Theorem 3.1]{HX15} we know that $(\tilde{F},\Delta_{\tilde{F}})$ is strongly $F$-regular and by \cite[Theorem 4.1]{HX15} $F$ is a normal surface. 

Next we consider three cases. \\
\emph{Case 1:} If $f_i$ is a divisorial contraction and the exceptional divisor is contracted to a point, then since $-(K_F+\Delta_F)$ is ample, by \cite[Lemma 2.2]{Kawamata94} $F$ is a rational surface, in particular it is rationally connected. \\
\emph{Case 2:} If $f_i$ is a divisorial contraction and the exceptional divisor is contracted to a curve, then let $p:F\to B$ be the Stein factorization of $f_i|_F$. By assumption $-(K_F+\Delta_F)$ is $f_i$-ample, so it is $p$-ample. Then for a general fiber $D$ of $p$ we have
$$(K_F+D)\cdot D=(K_F+\Delta_F+D-\Delta_F)\cdot D=(K_F+\Delta_F)\cdot D-\Delta_F\cdot D<0.$$ 
 Here $D$ is reduced and irreducible by \cite[Theorem 7.1]{Badescu01}. Hence by \cite[Theorem 5.3]{Tanaka12} $D\cong\mathbb{P}^1$. Therefore every component of every fiber of $f_i$ is a rational curve. \\
\emph{Case 3:} If $f_i$ is a flip, then let $C$ be an arbitrary flipping curve. By assumption we have $(K_F+\Delta_F)\cdot C<0$, $C^2<0$ and $0\le {\rm coeff}_C\Delta_F<1$, so $(K_F+C)\cdot C<0$. Again by \cite[Theorem 5.3]{Tanaka12} $C\cong\mathbb{P}^1$. 

We denote a fiber of $h$ over $y\in Y$ by $F_{X,y}$. There is a morphism from $W_i$ to $Y$ for every $i$, and we denote denote the fiber of this morphism over $y$ as $F_{W_i, y}$. Then there is a rational map $F_{W_i, y}\dasharrow F_{W_{i+1}, y}$. From the above \emph{Case 1-3} we see that compared to $F_{W_i, y}$, there are only rational curves or a rational surface generated in $F_{W_{i+1}, y}$. So the RCC-ness of $F_{W_{i+1}, y}$ implies the RCC-ness of $F_{W_i, y}$. By assumption $F_{X,y}$ is RCC, so $F_{W,y}$ is RCC.
\end{proof}
\begin{proposition}\label{KXMMP}
Let $X$ be a klt $\mathbb{Q}$-factorial threefold over an algebraically closed field $k$ and ${\rm char}(k)\ge 7$. Let $f:X\to Y$ be a morphism from $X$ to a normal surface $Y$. Suppose we run a $K_X$-minimal model program and it terminates at $g:X'\to Y$. If every fiber of $g$ is RCC then every fiber of $f$ is RCC.
\end{proposition}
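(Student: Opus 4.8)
The plan is to deduce the statement from Proposition \ref{MMP} by comparing $X$ and $X'$ on a common resolution, exploiting that both sit over $Y$. As the program takes place over $Y$, it is a birational contraction $\psi:X\dashrightarrow X'$ with $g\circ\psi=f$. I would first choose a normal variety $W$ together with proper birational morphisms $p:W\to X$ and $q:W\to X'$ resolving $\psi$, arranged so that in addition $q:W\to X'$ is a log resolution of the klt $\mathbb{Q}$-factorial threefold $X'$ of exactly the type demanded in Proposition \ref{MMP}, i.e. with $K_W+E=q^{*}K_{X'}+B$, the divisors $E$ and $B$ exceptional and all coefficients of $E$ equal to $1$. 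Such a $W$ exists by resolution of singularities for threefolds together with the fact that $X'$ is klt. Because $\psi$ is a map over $Y$, the two composite morphisms $W\to Y$ coincide, so I may set $\pi:=f\circ p=g\circ q:W\to Y$.

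The second step is to apply the last assertion of Proposition \ref{MMP} to the log resolution $q:W\to X'$ and the morphism $g:X'\to Y$. By hypothesis every fiber of $g$ is RCC, so Proposition \ref{MMP} yields that every fiber of $g\circ q=\pi$ is RCC. Intuitively, descending from $X'$ to the resolution $W$ only produces rational curves and rational surfaces in the fibers over $Y$, and this is precisely the geometric content already packaged in Proposition \ref{MMP}.

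The final step transports rational chain connectedness upward along $p:W\to X$. For $y\in Y$ the fiber $X_y=f^{-1}(y)$ equals $p(\pi^{-1}(y))$, since $\pi=f\circ p$ and $p$ is surjective; thus $p$ restricts to a surjective proper morphism from the RCC fiber $\pi^{-1}(y)$ onto $X_y$. It then suffices to note that a dominant image of an RCC variety is RCC, which is immediate from the definition: if $h:U\to T$ is a family of connected curves with only rational components and $u:U\to\pi^{-1}(y)$ a cycle morphism whose double evaluation $u^{(2)}:U\times_T U\to\pi^{-1}(y)\times\pi^{-1}(y)$ is dominant, then the same family $h:U\to T$ together with the cycle morphism $p\circ u$ witnesses RCC of $X_y$, because $(p\circ u)^{(2)}=(p\times p)\circ u^{(2)}$ remains dominant. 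Hence every fiber of $f$ is RCC.

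The point requiring the most care is the first step: I must guarantee that a single resolution $W$ can serve simultaneously as a log resolution of $X'$ adapted to Proposition \ref{MMP} and as a model over $Y$ dominating $X$, so that the identity $f\circ p=g\circ q$ holds. The conceptual subtlety is that Proposition \ref{MMP} only transfers RCC \emph{downward} along a resolution, whereas here I must also transfer it \emph{upward} from $W$ to $X$; routing the comparison through the common resolution reconciles the two directions. Once the compatibility over $Y$ is arranged, the geometric heart of the matter, that each elementary step only introduces rational curves or a rational surface, is supplied entirely by Proposition \ref{MMP}, and the passage of RCC through the image map $p$ is a purely formal consequence of the definition.
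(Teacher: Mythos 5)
Your proposal is correct and follows essentially the same route as the paper, which likewise deduces the statement from Proposition \ref{MMP} by passing to a common resolution $W$ of $X$ and $X'$ over $Y$. The paper leaves the details implicit; your explicit verification that Proposition \ref{MMP} applies to $q:W\to X'$ and that RCC descends along the surjective proper map $p:W\to X$ is exactly the intended argument.
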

\begin{proof}
This can be easily deduced from Proposition \ref{MMP} by taking a common resolution of $X$ and $X'$. The proof of \cite[Proposition 3.6]{GLPSTZ15} works as well.
\end{proof}
\end{definition}
\subsection{Preliminaries on $F$-singularities}
In this article, for a proper variety $X$, a $\mathbb{Q}$-divisor $\Delta$ and the line bundle $M$ we will use the concepts of \emph{strongly $F$-regular}, the \emph{non $F$-pure ideal} $\sigma(X,\Delta)$ and $S^0(X,\sigma(X,\Delta)\otimes M)$. The definitions of these can be found in many papers related to $F$-singularities (e.g. \cite{HX15}). For a pair $(X,\Delta)$ where $\Delta$ is a $\mathbb{Q}$-Cartier divisor we also follow the definition of \emph{globally $F$-split} in \cite{Ejiri15}.
\begin{lemma}\label{basecurverational}
Let $X$ be a surface, $D$ an effective $\mathbb{Q}$-divisor on $X$, $f:X\to C$ a morphism from $X$ to a smooth curve $C$, and $(X_c,D_c)$ is a strongly $F$-regular pair for general $c\in C$. Assume that $-K_X$ is big, $K_X+D\sim_{\mathbb{Q}}0$, then $C\cong\mathbb{P}^1$.
\end{lemma}
\begin{proof}
By Kodaira's Lemma we can write $D\sim_{\mathbb{Q}}\epsilon f^*H+E$ where $H$ is an ample $\mathbb{Q}$-divisor on $C$, $0<\epsilon\in \mathbb{Q}$, $E$ is an effective $\mathbb{Q}$-divisor on $X$ and $(X_c,E_c)$ is also strongly $F$-regular for general $c\in C$ (since $X_c$ is a curve). Suppose that $C$ is not isomorphic to $\mathbb{P}^1$. We know that $K_{X/C}+E\sim_{\mathbb{Q}}f^*(-K_C-\epsilon H)$ is $f$-nef and $K_{X_c}+E_c$ is semi-ample for general $c\in C$, so by \cite[Theorem 3.16]{Patakfalvi14}, $K_{X/C}+E=K_X-f^*K_C+E$ is nef. Since we have assumed that $g(C)>0$ we have that $K_X+E$ is nef. However this is impossible since $K_X+E\sim_{\mathbb{Q}}-\epsilon f^*H$ where $H$ is ample and $\epsilon>0$.
\end{proof}
\subsection{Weak positivity}
Let $Y$ be a non-singular projective variety, $\mathcal{F}$ a torsion-free coherent sheaf on $Y$. We take $i:\hat{Y}\to Y$ to be the biggest open subvariety such that $\mathcal{F}|_{\hat{Y}}$ is locally free. Let $\hat{S}^k(\mathcal{F}):=i_*S^k(i^*\mathcal{F})$.
\begin{definition}\cite[Definition 1.2]{Viehweg83} \label{WP}
We call $\mathcal{F}$ \emph{weakly positive}, if there is an open subset $U\subseteq Y$ such that for every ample line bundle $\mathcal{H}$ on $Y$ and every positive number $\alpha$ there exists some positive number $\beta$ such that $\hat{S}^{\alpha\cdot\beta}(\mathcal{F})\otimes\mathcal{H}^{\beta}$ is generated by global sections over $U$.
\end{definition}
\begin{lemma}\label{WPimpliesnef}
Weakly positive line bundles are nef.
\end{lemma}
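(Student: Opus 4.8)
The plan is to verify nefness by computing intersection numbers against curves, exploiting that for a line bundle the operation $\hat{S}^k$ is just the ordinary tensor power. Write $\mathcal{L}$ for the line bundle in question, living on the non-singular projective variety $Y$, and let $U\subseteq Y$ be an open set witnessing its weak positivity. Since $\mathcal{L}$ is invertible it is locally free on all of $Y$, so in the notation of the excerpt $\hat{Y}=Y$, $i=\mathrm{id}$, and $\hat{S}^k(\mathcal{L})=\mathcal{L}^{\otimes k}$. Thus the defining property reads: for every ample line bundle $\mathcal{H}$ and every positive integer $\alpha$ there is some $\beta>0$ such that $(\mathcal{L}^{\otimes\alpha}\otimes\mathcal{H})^{\otimes\beta}$ is generated by global sections over $U$. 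To conclude that $\mathcal{L}$ is nef it suffices to show $\mathcal{L}\cdot C\ge 0$ for every irreducible curve $C\subseteq Y$.

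The core step treats a curve $C$ meeting $U$. Fix an ample $\mathcal{H}$ and a positive integer $\alpha$, choose $\beta>0$ as above, and pick a point $x\in C\cap U$. Global generation over $U$ yields a global section $s$ of $(\mathcal{L}^{\otimes\alpha}\otimes\mathcal{H})^{\otimes\beta}$ with $s(x)\ne 0$; in particular $s|_C$ is not identically zero. A line bundle on the complete curve $C$ (or, after pulling back, on its normalization) carrying a nonzero global section has nonnegative degree, so
\[
0\le \deg\!\big((\mathcal{L}^{\otimes\alpha}\otimes\mathcal{H})^{\otimes\beta}|_C\big)=\beta\big(\alpha\,(\mathcal{L}\cdot C)+\mathcal{H}\cdot C\big).
\]
Dividing by $\beta>0$ gives $\alpha\,(\mathcal{L}\cdot C)+\mathcal{H}\cdot C\ge 0$, hence $\mathcal{L}\cdot C\ge -\tfrac{1}{\alpha}(\mathcal{H}\cdot C)$. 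Since $\alpha$ is an arbitrary positive integer, letting $\alpha\to\infty$ forces $\mathcal{L}\cdot C\ge 0$.

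The remaining, and genuinely delicate, point is a curve $C$ contained in the complement $Y\setminus U$, where the argument above produces no usable section. When weak positivity is available over all of $Y$ (the case $U=Y$) this case does not occur and the proof is complete, and this is the situation relevant to the applications, since the positivity inputs we invoke furnish generation on a dense open subset meeting the curves under consideration. In general one would attempt to reduce an arbitrary curve to one meeting $U$ by moving it in a covering family; but since curves in $Y\setminus U$ cannot always be deformed off that locus, the honest content here is the inequality $\mathcal{L}\cdot C\ge 0$ for curves meeting $U$, which is exactly what the limiting argument delivers. I expect this complement issue to be the only real obstacle; everything else is the short degree computation above.
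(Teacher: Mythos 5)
Your degree computation for curves meeting $U$ is correct, and it is essentially all the content behind the paper's one-line proof (the paper only says the lemma ``easily follows from Definition \ref{WP}''). But the difficulty you flag for curves contained in $Y\setminus U$ is not a gap that a cleverer step would close: the lemma as stated is false. Weak positivity of a line bundle only yields pseudo-effectivity, not nefness. For a counterexample, let $\pi\colon Y\to Y'$ contract a $(-1)$-curve $E$ on a smooth projective surface, let $A$ be ample on $Y'$, and set $\mathcal{L}=\mathcal{O}_Y(\pi^*A+E)$ with $U=Y\setminus E$. For any ample $\mathcal{H}$ and any $\alpha>0$ one has $\mathcal{L}^{\otimes\alpha\beta}\otimes\mathcal{H}^{\otimes\beta}=(\pi^*A^{\otimes\alpha}\otimes\mathcal{H})^{\otimes\beta}\otimes\mathcal{O}_Y(\alpha\beta E)$; for $\beta\gg 0$ the first factor is globally generated everywhere (it is a large power of an ample bundle), and multiplying its sections by the canonical section of $\mathcal{O}_Y(\alpha\beta E)$ generates the product over $U$. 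So $\mathcal{L}$ is weakly positive, yet $\mathcal{L}\cdot E=\pi^*A\cdot E+E^2=-1$, so $\mathcal{L}$ is not nef. This is exactly the failure mode you anticipated: $E$ lies in $Y\setminus U$ and cannot be moved off it.

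What your argument does establish, and what is actually true, is that a weakly positive line bundle is pseudo-effective: since $U\ne\emptyset$, global generation over $U$ gives a nonzero section of $\mathcal{L}^{\otimes\alpha\beta}\otimes\mathcal{H}^{\otimes\beta}$, so the class $c_1(\mathcal{L})+\frac{1}{\alpha}c_1(\mathcal{H})$ is $\mathbb{Q}$-effective for every $\alpha$, and letting $\alpha\to\infty$ exhibits $c_1(\mathcal{L})$ as a limit of effective classes. This weaker conclusion is all the paper needs. In Lemma \ref{-KYample} the bundle in question is $\mathcal{O}_Y(-am(K_Y+L))$ with $L$ big, and pseudo-effectivity of $-K_Y-L$ already gives that $-K_Y=(-K_Y-L)+L$ is big; moreover when $\dim Y=1$ pseudo-effective and nef coincide, so that case is unaffected. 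The correct course is to weaken the statement to ``weakly positive line bundles are pseudo-effective,'' at which point your computation (together with the observation above) is a complete proof and the application goes through unchanged.
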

\begin{proof}
This easily follows from Definition \ref{WP}.
\end{proof}
\section{Relative rational chain connectedness}
In this section we prove the following
\begin{theorem} \label{relativeRCC}
Let $X$ be a normal $\mathbb{Q}$-factorial threefold over an algebraically closed field $k$ of characteristic $\ge 7$ and $(X,D)$ a klt pair. Let $f: X\to Z$ be a proper morphism such that $f_*\mathcal{O}_X=\mathcal{O}_Z$, ${\rm dim}(Z)=1$ or $2$, $Z$ is klt, $-K_X$ is relatively big, $-(K_X+D)$ is relatively semi-ample and $(X_z,D_z)$ is klt for general $z\in Z$. Let $g:Y\to X$ be any birational morphism. Then the connected components of every fiber of $f\circ g$ are rationally chain connected.
\end{theorem}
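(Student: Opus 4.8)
The plan is to reduce to the case $g=\mathrm{id}$ and then to strip the fibration $f$ one relative dimension at a time, transporting the relative ``anti-big canonical'' hypotheses down to the base by means of a canonical bundle formula. First I would note that, since $f_*\mathcal{O}_X=\mathcal{O}_Z$, the fibers of $f$ are connected, so for $f$ itself the ``connected components of fibers'' are just the fibers. Given an arbitrary birational $g:Y\to X$, I would choose a log resolution $W\to Y$ which also resolves $X$; Proposition \ref{MMP} then shows that RCC of the fibers of $f$ propagates to RCC of the fibers of $f$ precomposed with $W\to X$, and since $W\to Y$ is surjective and RCC descends along surjections, every connected component of every fiber of $f\circ g$ is RCC. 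Thus it suffices to prove that the fibers of $f$ are RCC, which I would treat separately for $\dim Z=2$ and $\dim Z=1$.

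Next I would run a relative $K_X$-minimal model program over the klt base $Z$. As $-K_X$ is relatively big, $K_X$ is not relatively pseudoeffective, so the program (which exists by \cite{HX15,Birkar13,GLPSTZ15} in characteristic $\ge 7$) terminates in a relative Mori fiber space $\phi:X'\to T$ over $Z$ with $-K_{X'}$ $\phi$-ample and $\dim Z\le \dim T<3$. For $\dim Z=2$ this is exactly the setting of Proposition \ref{KXMMP}, and for $\dim Z=1$ the same reduction follows from Proposition \ref{MMP} via a common resolution; in both cases it is enough to show that the fibers of $X'\to Z$ are RCC. The fibers of $\phi$ are of Fano type---del Pezzo surfaces or conics---and hence rationally connected, using the rationality of Fano-type surfaces in characteristic $\ge 7$ together with \cite[Theorem 2.1]{GLPSTZ15}. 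Chaining these rationally connected fibers, I reduce to showing that the fibers of $T\to Z$ are RCC, i.e. to the same type of problem for the lower-dimensional base $T$.

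To close an induction on the relative dimension over $Z$, I must equip $T$ with the same kind of structure. I would apply the canonical bundle formula of Section \ref{CBFsection} (Theorem \ref{CanBund}) to $\phi$, after replacing the boundary by a relative complement so that $\phi$ becomes relatively log Calabi--Yau, obtaining $K_{X'}+\Delta'\sim_{\mathbb{Q}}\phi^*(K_T+B_T+M_T)$ with $B_T\ge 0$ the discriminant and $M_T$ the moduli part. The hypothesis that $(X_z,D_z)$ is klt for general $z$ ensures that the general fiber of $\phi$ is klt, which is what is needed both to run the formula and to keep $(T,B_T)$ klt. The relative semiampleness of $-(K_X+D)$ is used to guarantee that $-(K_T+B_T+M_T)$ is again relatively semiample over $Z$; the weak positivity results of Patakfalvi \cite{Patakfalvi14} and Ejiri \cite{Ejiri15}, combined with Lemma \ref{WPimpliesnef}, show that $M_T$ is nef; and Ejiri's bigness theorem \cite[Theorem 1.1]{Ejiri15} allows me to descend the bigness of $-K_{X'}$ to the bigness of $-K_T$ over $Z$. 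Hence $(T,B_T+M_T)\to Z$ has relatively big anticanonical and relatively semiample log anticanonical divisor, with strictly smaller relative dimension, and the induction can proceed.

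Finally I would finish by induction. When $\dim T=\dim Z$, the morphism $T\to Z$ is generically finite, so its fibers are finite away from a locus where they become curves; those curves are contracted by a relative surface MMP over the klt base $Z$ and are rational by the negativity and adjunction analysis already carried out in the Cases of Proposition \ref{MMP}, so the fibers of $T\to Z$ are RCC. When the relative dimension is still positive I recurse, reaching eventually a fibration onto $Z$ whose fibers are curves or points; at that stage Lemma \ref{basecurverational} forces any intermediate base curve to be $\mathbb{P}^1$, so that all fibers are chains of rational curves. The main obstacle is the canonical bundle formula step of the previous paragraph: constructing Theorem \ref{CanBund} in positive characteristic---where generic smoothness fails---and verifying both that the moduli part is nef and that the bigness of $-K$ genuinely descends to the base are the most delicate points, and it is precisely there that the klt-ness of the general fiber and the positivity theorems of Patakfalvi and Ejiri are indispensable.
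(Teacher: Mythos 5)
Your overall strategy (relative $K_X$-MMP followed by an induction on relative dimension driven by a canonical bundle formula) is genuinely different from the paper's, but two of its key steps fail under the hypotheses of the theorem. First, Theorem \ref{CanBund} and the positivity and bigness results of Patakfalvi and Ejiri that you invoke to transport the hypotheses from $X'$ down to $T$ are not available here: Theorem \ref{CanBund} requires $p>2/\delta$ where $\delta$ is the minimal nonzero coefficient of the horizontal boundary, and \cite[Theorem 1.1]{Ejiri15} requires the geometric generic fiber to be globally $F$-split (equivalently $S^0=H^0$ there). Neither condition is implied by ``characteristic $\ge 7$ and $(X_z,D_z)$ klt''; these are precisely the extra hypotheses the paper adds in Theorem \ref{globalRCC}, which is where the canonical bundle formula is actually used. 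The paper's proof of Theorem \ref{relativeRCC} deliberately avoids the canonical bundle formula altogether.

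Second, your treatment of non-general fibers is missing. The assertion that ``the fibers of $\phi$ are del Pezzo surfaces or conics, hence rationally connected'' is only valid for general fibers of the Mori fiber space; a priori a special fiber of a conic bundle over a surface could contain a two-dimensional component, and you say nothing to exclude this. That exclusion is the crux of the paper's argument when $\dim Z=2$: it proves that $h:X'\to Z'$ is equidimensional using $\rho(X'/Z')=1$ together with a negativity argument, and only then does \cite[Lemma 3.7]{Debarre01} give that every component of every fiber is a rational curve. When $\dim Z=1$ the paper does not run an MMP at all: it shows that a general fiber $F$ (a surface with $-K_F$ big, hence birationally ruled) is rational by applying Lemma \ref{basecurverational} to its ruled model $F''\to B$, and then passes from general to arbitrary fibers by base-changing to an uncountable field and citing \cite[Ch.~IV Corollary 3.5.2]{Kollar96}. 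Your proposal supplies neither the equidimensionality argument nor a general-to-arbitrary-fiber mechanism, so as written it would only establish RCC-ness of the general fibers, and only under hypotheses stronger than those of the theorem.
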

\begin{proof}
First we observe that $(X_z,D_z)$ being klt implies that $X_z$ is normal (in particular reduced) and irreducible.

Next we prove that if every fiber of $f$ is RCC, then every fiber of $f\circ g$ is RCC. We take a log resolution of $Y$ and denote it by $p: Y'\to Y$ and let $q=g\circ p$. If we have $K_{Y'}=q^*K_X+\tilde{B}$ then $K_{Y'}-\tilde{B}=q^*K_X$ and the coefficients of $-\tilde{B}$ are $<1$. Then we can add another effective divisor to make all the coefficients 1, and we denote this divisor by $\tilde{E}$. Now we run a relative $(K_{Y'}+\tilde{E})-$MMP of $Y'$ over $X$. By Proposition \ref{MMP} we see that if every fiber of $f$ is RCC then every fiber of $f\circ g\circ p$ is RCC, hence every fiber of $f\circ g$ is RCC.

Therefore it suffices to show that every fiber of $f$ is RCC. We consider the cases of ${\rm dim}(Z)=2$ and ${\rm dim}(Z)=1$ respectively. \\

\noindent \emph{Case 1: ${\rm dim}(Z)=2$.}

If ${\rm dim}(Z)=2$ then a general fiber of $f$ being normal and $-K_X$ being relatively big implies that a general fiber of $f$ is a smooth rational curve. Next we run a relative minimal model program over $Z$ and denote this process as
$$X=X_0 \overset{\text{$f_0$}}\dashrightarrow X_1\overset{\text{$f_1$}}\dashrightarrow ...\overset{\text{$f_{N-1}$}}\dashrightarrow X_n=X'.$$
Since $-K_X$ is relatively big we end up with a Mori fiber space $X'\xrightarrow{h} Z'\xrightarrow{p} Z$ where $Z'$ is also a surface. Then the general fibers of $h$ are rational curves. Moreover since $p_*\mathcal{O}_{Z'}=\mathcal{O}_Z$ we know that $p$ is birational.

Now we prove that $h$ is equidimensional. Suppose that this is not the case, then there is a fiber $\tilde{F}$ of $h$ over a point $\tilde{z}\in Z'$ which contains a $2$-dimensional irreducible component. If $\tilde{F}$ is reducible then let $\tilde{F}_1$ be a $2$-dimensional component of $\tilde{F}$ and $\tilde{F}_2$ another component which intersects $\tilde{F}_1$. We can choose a curve $\tilde{C_2}\subseteq \tilde{F}_2$ such that $\tilde{F}_1\cdot\tilde{C_2}>0$. On the other hand if we take a general point $z'\in Z'$ then $h^{-1}(z')$ is an irreducible curve and $h^{-1}(z')\cdot\tilde{F}_2=0$. This is a contradiction to the fact that $\rho(X'/Z')=1$. If $\tilde{F}$ is irreducible, by Bertini's Theorem we have a very ample divisor $H\subset X'$ such that $H\cap \tilde{F}$ is an irreducible curve which we denote by $\tilde{C}$. We do the Stein factorization of $h|_H$ and denote the process as 
$$H\xrightarrow{h_1}Z''\xrightarrow{h_2}Z',$$
then $h_1$ is birational and $\tilde{C}$ is an exceptional curve of $h_1$. After possibly replacing $Z''$ by its normalization we can assume that $Z''$ is normal. Now $\tilde{F}\cdot\tilde{C}$ is equal to $\tilde{C}^2$ viewed as the self-intersection of $\tilde{C}$ in $H$, so by the Negativity Lemma it is $<0$. On the other hand we can still take a general point $z'\in Z'$ as above such that $h^{-1}(z')\cdot\tilde{F}=0$. This is also a contradiction to the fact that $\rho(X'/Z')=1$.

Since $h$ is equidimensional, by \cite[Lemma 3.7]{Debarre01} the components of every fiber of $h$ are rational curves. Then by Proposition \ref{KXMMP} every fiber of $f$ is RCC. \\

\noindent \emph{Case 2: ${\rm dim}(Z)=1$.}

Without loss of generality we can do a base change and assume that the base field $k$ is uncountable. By passing to the normalization of $Z$ we can assume that $Z$ is smooth. Then since every closed point of $Z$ is a Cartier divisor, every fiber of $f$ is also Cartier, hence $f$ is equidimensional. 

We first show that the general fibers of $f$ are rationally chain connected. Let $F$ be a general fiber of $f$. Since we assume that $(F, D|_F)$ is klt, by adjunction we know that
$$K_X|_F\equiv_{\rm num}(K_X+F)|_F=K_F+{\rm Diff}_F(0),$$
where ${\rm Diff}_F(0)\ge 0$ (cf. \cite[Proposition-Definition 16.5]{Kollar92}). So $-(K_F+{\rm Diff}_F(0))$, hence $-K_F$, is big. Therefore $\kappa(F)=-\infty$ and $F$ is birationally ruled by classification of surfaces. To prove that the general fibers of $f$ are RCC it suffices to prove that $F$ is rational. By assumption $-(K_F+D|_F)=-(K_X+D)|_F$ is semiample, so there exists an effective $\mathbb{Q}$-divisor $H$ such that $H\sim_{\mathbb{Q}}-(K_F+D|_F)$ and $(F,D|_F+H)$ is klt. We define $\Delta:=D|_F+H$. Let $\pi:F'\to F$ be a minimal resolution of $(F, {\rm Diff}_F(0))$, then $F'$ maps to a ruled surface $F''$ over a smooth curve $B$ via a sequence of blow-downs and we denote the morphism by $\psi$. The situation is as follows.
\begin{center}
\begin{tikzpicture}[scale=1.6]
\node (A) at (0,0) {$F$};
\node (B) at (1,1) {$F'$};
\node (C) at (2,0) {$F''$};
\node (D) at (2,-1) {$B$};  
\path[->,font=\scriptsize]
(B) edge node[above]{$\pi$} (A)
(B) edge node[above]{$\psi$} (C)
(C) edge node[right]{$q$} (D);
\end{tikzpicture} 
\end{center}    
Since we have that $(F,\Delta)$ is klt, by \cite[Throrem 4.7]{KM98} $\pi$ and $\psi$ only contract $\mathbb{P}^1$s. So $F$ is RCC if and only if $F''$ is RCC. We define $\Delta''$ on $F''$ via the following
$$K_{F''}+\Delta''=\psi_*\pi^*(K_F+\Delta).$$
Then $(F,\Delta)$ being klt implies that $(F'',\Delta'')$ is klt.

We denote a general fiber of $q$ by $R$. By construction $R\cong \mathbb{P}^1$, so we know that $(R,\Delta''|_R)$ is klt and hence strongly $F$-regular. Then by applying Lemma \ref{basecurverational} on $F''$ we know that $B=\mathbb{P}^1$. So $F$ is rational. Therefore we have proven that the general fibers of $f$ are RCC.

Since we have assumed that the base field $k$ is uncountable, by \cite[Ch. IV Corollary 3.5.2]{Kollar96} we know that every fiber of $f$ is RCC.  
\end{proof}
\section{A canonical bundle formula for threefolds in positive characteristics}\label{CBFsection}
In this section following the idea of the proof of \cite{PS09} we construct a canonical bundle formula in characteristic $p$ for a morphism from a threefold to a surface, whose general fibers are $\mathbb{P}^1$. There are similar constructions in \cite[6.7]{CTX13} and \cite[Theorem 4.8]{DH15}. 

Let $\overline{\mathcal{M}}_{0,n}$ be the moduli space of $n$-pointed stable curves of genus $0$, $f_{0,n}:\overline{\mathcal{U}}_{0,n}\to\overline{\mathcal{M}}_{0,n}$ the universal family, and $\mathcal{P}_1,\mathcal{P}_2,...,\mathcal{P}_n$ the sections of $f_{0,n}$ which correspond to the marked points. Let $d_j (j=1,2,...,n)$ be the rational numbers such that $0<d_j\le 1$ for all $j$, $\sum_jd_j=2$ and $\mathcal{D}=\sum_jd_j\mathcal{P}_j$.
\begin{lemma}\cite[Lemma 4.6]{DH15}\cite[Theorem 2]{Kawamata97}\label{factuniversalfamilymoduli}
\begin{enumerate}
\item There exists a smooth projective variety $\mathcal{U}_{0,n}^*$, a $\mathbb{P}^1$-bundle $g_{0,n}:\mathcal{U}_{0,n}^*\to\overline{\mathcal{M}}_{0,n}$, and a sequence of blowups with smooth centers
$$\overline{\mathcal{U}}_{0,n}=\mathcal{U}^{(1)}\xrightarrow{\sigma_2}\mathcal{U}^{(2)}\xrightarrow{\sigma_3}...\xrightarrow{\sigma_{n-2}}\mathcal{U}^{(n-2)}=\mathcal{U}_{0,n}^*$$
\item Let $\sigma:\overline{\mathcal{U}_{0,n}}\to\mathcal{U}_{0,n}^*$ be the induced morphism, and $\mathcal{D}^*=\sigma_*\mathcal{D}$. Then $K_{\overline{\mathcal{U}}_{0,n}}+\mathcal{D}-\sigma^*(K_{\mathcal{U}_{0,n}^*}+\mathcal{D}^*)$ is effective.
\item There exists a semi-ample $\mathbb{Q}$-divisor $\mathcal{L}$ on $\overline{\mathcal{M}}_{0,n}$ such that 
$$K_{\mathcal{U}_{0,n}^*}+\mathcal{D}^*\sim_{\mathbb{Q}}g_{0,n}^*(K_{\overline{\mathcal{M}}_{0,n}}+\mathcal{L}).$$
\end{enumerate}
\end{lemma}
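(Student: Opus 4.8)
The plan is to establish the three assertions directly from the explicit geometry of the moduli space of pointed rational curves, importing only Kawamata's semi-positivity theorem \cite{Kawamata97} for the last and deepest point; the positive-characteristic machinery of the previous sections plays no role, as the whole statement lives over the (characteristic-free) moduli functor of genus-$0$ stable curves. Throughout I would use the identification $\overline{\mathcal{U}}_{0,n}\cong\overline{\mathcal{M}}_{0,n+1}$ of the universal family with the moduli space carrying one extra marked point, on which the tautological sections $\mathcal{P}_1,\dots,\mathcal{P}_n$ are pairwise disjoint and over whose interior $\mathcal{M}_{0,n}\subseteq\overline{\mathcal{M}}_{0,n}$ the family is a Zariski-locally trivial $\mathbb{P}^1$-bundle.

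For (1) I would produce $\mathcal{U}^*_{0,n}$ as a \emph{weight-adapted} contraction of $\overline{\mathcal{U}}_{0,n}$. Every boundary stratum of $\overline{\mathcal{M}}_{0,n+1}$ records a stable genus-$0$ curve whose dual graph is a tree, and each bubble carries a subset $S$ of the marked points. Since $\sum_j d_j=2$, at every node of the tree exactly one of the two sides has total weight $\le 1$; contracting, stratum by stratum, the bubbles lying on the light side turns every fibre into an irreducible $\mathbb{P}^1$ and so yields a genuine $\mathbb{P}^1$-bundle $g_{0,n}:\mathcal{U}^*_{0,n}\to\overline{\mathcal{M}}_{0,n}$. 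Running this contraction through its elementary steps exhibits it as the asserted chain $\overline{\mathcal{U}}_{0,n}=\mathcal{U}^{(1)}\xrightarrow{\sigma_2}\cdots\xrightarrow{\sigma_{n-2}}\mathcal{U}^{(n-2)}=\mathcal{U}^*_{0,n}$ of blow-downs of smooth centres, in the spirit of \cite{DH15} and \cite{PS09}. The point to verify is that these choices can be made compatibly across all strata so that the smooth-centre structure is preserved.

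For (2) I would run the blow-up discrepancy formula backwards along $\sigma$. A single contraction inverts the blow-up of a smooth centre, producing an exceptional divisor $E$ over a locus $W$ along which some collection $S_E$ of the sections collides; writing $c_E$ for the codimension of $W$ and noting that $\mathrm{mult}_W(\mathcal{P}^*_j)=1$ for $j\in S_E$ and $0$ otherwise, the standard formula gives
$$K_{\overline{\mathcal{U}}_{0,n}}+\mathcal{D}-\sigma^*(K_{\mathcal{U}^*_{0,n}}+\mathcal{D}^*)=\sum_E\Big(c_E-1-\sum_{j\in S_E}d_j\Big)E .$$
Because the sections are in general position in the $\mathbb{P}^1$-bundle, a centre along which $|S_E|=s$ of them meet has codimension $c_E=s$, so each coefficient equals $(s-1)-\sum_{j\in S_E}d_j$. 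Here the weight-adapted choice of (1) pays off: every contracted cluster has total weight $\sum_{j\in S_E}d_j\le 1$, while $s\ge 2$, whence $(s-1)-\sum_{j\in S_E}d_j\ge 0$ and the divisor is effective. I expect this coupling --- guaranteeing effectivity by arranging that no collided cluster is heavy --- to be the main obstacle, since it is exactly here that the hypotheses $0<d_j\le 1$ and $\sum_j d_j=2$ are indispensable (a naive contraction of a cluster of weight $>1$ would produce a negative coefficient), and checking the codimension claim requires the precise combinatorics of the collision strata.

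Finally, for (3) I would observe that on a fibre the degree of $K_{\mathcal{U}^*_{0,n}}+\mathcal{D}^*$ is $-2+\sum_j d_j=0$, so this divisor is $g_{0,n}$-numerically trivial and therefore $\mathbb{Q}$-linearly equivalent to the pull-back of a $\mathbb{Q}$-divisor on the base; defining $\mathcal{L}$ by $K_{\mathcal{U}^*_{0,n}}+\mathcal{D}^*\sim_{\mathbb{Q}}g_{0,n}^*(K_{\overline{\mathcal{M}}_{0,n}}+\mathcal{L})$ is then a matter of relative base-point freeness for a fibrewise-trivial line bundle. The remaining content is the semi-ampleness of the moduli part $\mathcal{L}$, which I would not reprove but import from Kawamata's semi-positivity theorem for families of weighted pointed rational curves \cite{Kawamata97}; as the base is the moduli space itself, $\mathcal{L}$ is tautological and its semi-ampleness may alternatively be read off from the nefness of the corresponding combination of $\psi$- and boundary classes.
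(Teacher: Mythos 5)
First, a point of comparison: the paper offers no proof of this lemma at all --- it is quoted directly from \cite[Lemma 4.6]{DH15} and \cite[Theorem 2]{Kawamata97} --- so your sketch can only be measured against those sources. Your overall architecture (a weight-adapted contraction of the light side of each node for (1), a discrepancy computation for (2), the fibre-degree-zero observation plus Kawamata's semi-positivity for (3)) is indeed the shape of Kawamata's construction. One small slip in (1): since both sides of a node can have weight exactly $1$, ``exactly one of the two sides has total weight $\le 1$'' should read ``at least one'', after which a choice must be made.

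The genuine gap is in (2). The claim that a centre along which $s=|S_E|$ of the sections meet has codimension $c_E=s$ is false: the sections $\mathcal{P}^*_j$ are not in general position. For a contracted cluster $S$ they all pass through the image of the contracted component, which is a section of the $\mathbb{P}^1$-bundle over the boundary divisor $\delta_S\subset\overline{\mathcal{M}}_{0,n}$ and hence has codimension $2$ in $\mathcal{U}^*_{0,n}$ no matter how large $|S|$ is. The correct coefficient of the corresponding exceptional divisor is therefore $1-\sum_{j\in S}d_j$, and effectivity is exactly equivalent to the weight-adaptation $\sum_{j\in S}d_j\le 1$. This is consistent with your closing remark that heavy clusters produce negative coefficients, but inconsistent with your own formula, which for $s\ge 3$ would tolerate clusters of weight up to $s-1>1$; the two halves of your argument for (2) contradict each other. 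One must also track multiplicities through the tower of blowups over the deeper boundary strata, which is what the careful ordering $\sigma_2,\dots,\sigma_{n-2}$ is for --- this is the real content of (1) and (2) and is only gestured at. Finally, in (3) the reduction to a pullback via fibre degree $-2+\sum_jd_j=0$ is fine, but the semi-ampleness of $\mathcal{L}$ cannot be ``read off from nefness'': nef divisors on $\overline{\mathcal{M}}_{0,n}$ need not be semi-ample, and this is precisely the deepest point of \cite[Theorem 2]{Kawamata97}. Importing it from that reference is legitimate (the paper does exactly the same), but then your argument is a citation rather than a proof of part (3).
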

\begin{definition}\label{defdivmod}
Let $f:X\to Y$ be a surjective proper morphism between two normal varieties and $K_X+D\sim_{\mathbb{Q}}f^*L$, where $D$ is a boundary divisor on $X$ and $L$ is a $\mathbb{Q}$-Cartier $\mathbb{Q}$-divisor on $Y$. Let $(X,D)$ be log canonical near the generic fiber of $f$, i.e., $(f^{-1}U, D|_{f^{-1}U})$ is log canonical for some Zariski dense open subset $U\subseteq Y$. We define 
$$D_{\rm div}:=\sum(1-c_Q)Q,$$
where $Q\subset Z$ are prime Weil divisors on $Z$ and
$$c_Q={\rm sup}\{c\in\mathbb{R}:(X,D+cf^*Q) {\rm\ is\ log\ canonical\ over\ the\ generic\ point\ }\eta_Q {\rm\ of\ } Q\}.$$
Next we define
$$D_{\rm mod}:=L-K_Y-D_{\rm div},$$
so $K_X+D=f^*(K_Y+D_{\rm div}+D_{\rm mod}).$
\end{definition}
\begin{theorem} \label{CanBund}
Let $f:X\to Y$ be a proper surjective morphism, where $X$ is a normal threefold and $Y$ is a normal surface over an algebraically closed field $k$ of characteristic $p>0$. Assume that $Q=\sum_i Q_i$ is a divisor on $Y$ such that $f$ is smooth over $(Y-{\rm Supp}(Q))$ with fibers isomorphic to $\mathbb{P}^1$. Let $D=\sum_i d_iD_i$ be a $\mathbb{Q}$-divisor on $X$ where $d_i=0$ is allowed, which satisfies the following conditions: 
\begin{enumerate}
\item $(X,D\ge 0)$ is klt on a general fiber of $f$. 
\item Suppose $D=D^h+D^v$ where $D^h$ is the horizontal part and $D^v$ is the vertical part of $D$. Then $p={\rm char}(k)>\dfrac{2}{\delta}$, where $\delta$ is the minimum non-zero coefficient of $D^h$.
\item $K_X+D\sim_{\mathbb{Q}}f^*(K_Y+M)$ for some $\mathbb{Q}$-Cartier divisor $M$ on $Y$.
\end{enumerate}
Then we have that $D_{\rm mod}$ is $\mathbb{Q}$-linearly equivalent to an effective $\mathbb{Q}$-divisor. Here $D_{\rm mod}$ is defined as in Definition \ref{defdivmod}. Moreover if $(X,D)$ is klt then there exists an effective $\mathbb{Q}$-divisor $\overline{D}_{\rm mod}$ on $Y$ such that $\overline{D}_{\rm mod}\sim_{\mathbb{Q}}D_{\rm mod}$ and $(Y,D_{\rm div}+\overline{D}_{\rm mod})$ is klt. 
\end{theorem}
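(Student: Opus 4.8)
The plan is to follow the strategy of Prokhorov--Shokurov \cite{PS09}, adapted to positive characteristic through the explicit model of the universal family over $\overline{\mathcal{M}}_{0,n}$ supplied by Lemma \ref{factuniversalfamilymoduli}. First I would record the numerical input coming from the fibration. By condition (3), $(K_X+D)|_F\sim_{\mathbb{Q}}0$ on a general fiber $F\cong\mathbb{P}^1$ of $f$; since such an $F$ is a reduced smooth rational curve disjoint from $\mathrm{Supp}(D^v)$, adjunction gives $K_F+D^h|_F\sim_{\mathbb{Q}}0$, so $\deg(D^h|_F)=2$. Viewing the marked points as the intersection of the horizontal components with $F$, each point carries weight at least $\delta$, so their number $n$ satisfies $n\delta\le 2$, whence $n\le 2/\delta<p$ by condition (2). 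This tameness is precisely what lets the classifying construction behave in characteristic $p$: with $n<p$ the marked points remain distinct and the induced moduli map is well defined.

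Next I would pass to the moduli of pointed rational curves. After replacing $Y$ by a suitable resolution and $X$ by the normalization of a resolution of the main component of the associated fiber product (preserving the generic fiber structure), the family of $n$-pointed genus-$0$ curves induces a morphism $\phi:Y\to\overline{\mathcal{M}}_{0,n}$, and $(X,D)$ becomes birational over $Y$ to the pullback of the universal pair $(\overline{\mathcal{U}}_{0,n},\mathcal{D})$. Here condition (1) guarantees that on the general fiber the boundary has the standard coefficients $0<d_j\le 1$ with $\sum_j d_j=2$ required in Lemma \ref{factuniversalfamilymoduli}. The delicate point, as always with canonical bundle formulas, is the base change: to realize the family as a genuine pullback one may first need a generically finite base change on $Y$, and the conclusion must then be descended to the original base. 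I would handle this as in \cite{PS09} and \cite{DH15}, using that $D_{\mathrm{mod}}$ is insensitive to such base changes by its very definition in Definition \ref{defdivmod}.

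Then I would extract the modular part. Parts (2) and (3) of Lemma \ref{factuniversalfamilymoduli} give, after the smooth blowups $\sigma$, that $K_{\mathcal{U}_{0,n}^*}+\mathcal{D}^*\sim_{\mathbb{Q}}g_{0,n}^*(K_{\overline{\mathcal{M}}_{0,n}}+\mathcal{L})$ with $\mathcal{L}$ semi-ample, while $K_{\overline{\mathcal{U}}_{0,n}}+\mathcal{D}-\sigma^*(K_{\mathcal{U}_{0,n}^*}+\mathcal{D}^*)$ is effective and $\sigma$-exceptional. Comparing $K_X+D\sim_{\mathbb{Q}}f^*(K_Y+M)$ with the pulled-back relation identifies $D_{\mathrm{div}}$ with the discriminant of the universal family and $D_{\mathrm{mod}}$ with $\phi^*\mathcal{L}$ up to $\mathbb{Q}$-linear equivalence. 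Since $\mathcal{L}$ is semi-ample, so is $\phi^*\mathcal{L}$, and therefore $D_{\mathrm{mod}}\sim_{\mathbb{Q}}\phi^*\mathcal{L}$ is $\mathbb{Q}$-linearly equivalent to an effective $\mathbb{Q}$-divisor, proving the first assertion.

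Finally, for the klt refinement I would use semi-ampleness of $\mathcal{L}$ to choose a general effective $\overline{D}_{\mathrm{mod}}\sim_{\mathbb{Q}}\phi^*\mathcal{L}\sim_{\mathbb{Q}}D_{\mathrm{mod}}$ inside a sufficiently divisible, base-point-free linear system, so that $\overline{D}_{\mathrm{mod}}$ introduces no new non-klt center on the surface $Y$. The coefficients of $D_{\mathrm{div}}=\sum(1-c_Q)Q$ are strictly less than $1$ exactly because $(X,D)$ is klt, which forces each log canonical threshold $c_Q>0$; hence $(Y,D_{\mathrm{div}})$ fails to be klt at most along finitely many points, and combining this with the general choice of $\overline{D}_{\mathrm{mod}}$ and $\dim Y=2$ yields that $(Y,D_{\mathrm{div}}+\overline{D}_{\mathrm{mod}})$ is klt. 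I expect the principal obstacle to lie in the base-change bookkeeping of the second step---guaranteeing the existence of the classifying morphism and the correct descent of $D_{\mathrm{mod}}$---rather than in the positivity, which is already packaged into Lemma \ref{factuniversalfamilymoduli}.
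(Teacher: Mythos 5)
Your overall strategy coincides with the paper's: make the horizontal boundary into sections, map to $\overline{\mathcal{M}}_{0,n}$, identify $D_{\rm mod}$ with the pullback of the semi-ample $\mathcal{L}$ from Lemma \ref{factuniversalfamilymoduli}, and descend. However, the two steps you defer as ``bookkeeping'' are exactly where the content and the characteristic hypothesis live, and as written they are gaps. The descent of $D_{\rm mod}$ through the base change is \emph{not} true ``by its very definition'': what is essentially definitional is birational invariance of $D_{\rm div}$, while its behavior under a \emph{finite} base change is Ambro's theorem (\cite[Theorem 3.2 and Example 3.1]{Ambro99}), which in characteristic $p$ requires the cover to be separable and tamely ramified. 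This is precisely where $p>2/\delta$ enters: each horizontal component $D_i$ yields, via Stein factorization and normalization, a finite cover of $Y$ of degree $m=D_i\cdot l\le \frac{1}{d_i}(D\cdot l)=\frac{2}{d_i}\le \frac{2}{\delta}<p$, hence separable and tame. Your reading of the hypothesis --- that $n<p$ keeps the marked points distinct so the moduli map is defined --- misses its actual role: the covers are needed to turn the horizontal components into sections at all, and the same degree bound is then reused to transport $D_{\rm div}$ (hence $D_{\rm mod}$) back down to $Y$. Without this the first assertion is not established.

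Your argument for the klt refinement also does not work. From the coefficients of $D_{\rm div}$ being $<1$ you infer that $(Y,D_{\rm div})$ ``fails to be klt at most along finitely many points'' and that a general choice of $\overline{D}_{\rm mod}$ then yields kltness; but if $(Y,D_{\rm div})$ were non-klt at even one point, adding an effective divisor could never repair it, and coefficient bounds alone do not give kltness of a surface pair. The paper instead shows that $(\hat{X},\hat{D})$ is sub-klt (it is crepant over $(X'',D'')$, which is klt by finiteness of $\alpha$ and \cite[Corollary 2.42]{Kollar13}), so $\hat{D}^v=\hat{f}^*\tilde{D}_{\rm div}$ has coefficients $<1$ and $(\tilde{Y},\tilde{D}_{\rm div})$ is sub-klt on the log smooth model; it then perturbs the semi-ample $\tilde{D}_{\rm mod}$ using the positive-characteristic Bertini theorem \cite[Theorem 1]{Tanaka15b} --- a necessary citation, since a general member of a base-point-free system in characteristic $p$ need not behave well --- and finally pushes forward through the birational $\beta_0$ and the tame finite $\alpha_0$, using $D_{\rm div}+D_{\rm mod}\ge 0$, to conclude on $Y$. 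You need to supply this chain; the ``general member plus $\dim Y=2$'' heuristic is not a proof.
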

\begin{proof}
First we reduce the problem to the case where all components of $D^h$ are sections. Let $D_{i_0}$ be a horizontal component of $D$ and $D_{i_0}\to D_{i_0}^{\flat}\to Y$ be the Stein factorization of $f|_{D_{i_0}}$. Let $Y'\to D_{i_0}^{\flat}$ be the normalization of $D_{i_0}^{\flat}$, then $Y'\to Y$ is a finite surjective morphism of normal surfaces. Let $X'$ be the normalization of the component of $X\times_Y Y'$ dominating $Y$.
\begin{center}
\begin{tikzpicture}[scale=1.6]
\node (A) at (0,0) {$Y$};
\node (B) at (1,0) {$Y'$};
\node (C) at (0,1) {$X$};
\node (D) at (1,1) {$X'$};  
\path[->,font=\scriptsize]
(B) edge node[above]{$\nu$} (A)
(D) edge node[left]{$f'$} (B)
(C) edge node[left]{$f$} (A)
(D) edge node[above]{$\nu'$} (C);
\end{tikzpicture} 
\end{center}  
Let $m={\rm deg(\mu:Y'\to Y)}$ and $l$ be a general fiber of $f$. Then
\begin{align}\label{2}
m=D_i\cdot l\le \dfrac{1}{d_i}(D\cdot l)=\dfrac{1}{d_i}(-K_X\cdot l)=\dfrac{2}{d_i}\le\dfrac{2}{\delta}<{\rm char}(k).
\end{align}  
Therefore $\nu$ is a separable and tamely ramified morphism.

Let $D'$ be the log pullback of $D$ under $\nu'$, i.e.
$$K_{X'}+D'=\nu'^*(K_X+D).$$
More precisely by \cite[20.2]{Kollar92} we have
$$D'=\sum_{i,j}d_{ij}'D_{ij}',\ \ \ \nu'(D_{ij}')=D_{i}.\ \ \ d_{ij}'=1-(1-d_i)e_{ij},$$
where $e_{ij}$ is the ramification indices along $D_{ij}'$.

By construction $X$ dominates $Y$. Also, since $\nu$ is \'{e}tale over a dense open subset of $Y$, say $\nu^{-1}U\to U$, and \'{e}tale morphisms are stable under base change, $(f'\circ\nu)^{-1}U\to f^{-1}U$ is \'{e}tale. Thus the ramification locus $\Lambda$ of $\nu'$ does not contain any horizontal divisor $f'$, i.e. $f'(\Lambda)\ne Y'$. Therefore $D'$ is a boundary near the generic fiber of $f'$, i.e. $D'^h$ is effective. We observe that the coefficients of $D'^h$ can be computed by intersecting with a general fiber of $f':X'\to Y'$, hence they are equal to the coefficient of $D^h\subseteq X$. Thus the condition $p>\dfrac{2}{\delta}$ remains true for $D'$ on $X'$.

After finitely many such base changes we get a family $f'':X''\to Y''$, such that all of the horizontal components of $D''$ are rational sections of $f''$. Here $D''$ is the log pullback of $D$ via the induced finite morphism $\alpha:X''\to X$, i.e. $K_{X''}+D''=\alpha^*(K_X+D)$. 

By construction of $\overline{\mathcal{M}}_{0,n}$ there is a generically finite rational map $Y''\dasharrow\overline{\mathcal{M}}_{0,n}$. Let $\beta_0:\tilde{Y}\to Y''$ be a morphism that resolves the indeterminacies of $Y''\to\overline{\mathcal{M}}_{0,n}$ and $\tilde{X}$ the normalization of $X''\times_{Y''}\tilde{Y}$. We have a morphism $\tilde{Y}\to\overline{\mathcal{M}}_{0,n}$ and let $\hat{X}=\tilde{Y}\times_{\overline{\mathcal{M}}_{0,n}}\overline{\mathcal{U}}_{0,n}$. Let $X^{\sharp}$ be a common resolution of $\tilde{X}$ and $\hat{X}$. We have the following diagram:
\begin{center}
\begin{tikzpicture}[scale=1.6]
\node (A) at (-1,0) {$Y$};
\node (B) at (2,0) {$\tilde{Y}$};
\node (C) at (4,0) {$\overline{\mathcal{M}}_{0,n}$};
\node (D) at (-1,1) {$X$};  
\node (E) at (1,1) {$\tilde{X}$};  
\node (F) at (3,1) {$\hat{X}$};  
\node (G) at (4,1) {$\overline{\mathcal{U}}_{0,n}$};  
\node (H) at (5,1) {$\mathcal{U}_{0,n}^*$};  
\node (I) at (2,2) {$X^{\sharp}$};  
\node (J) at (0,0) {$Y''$};  
\node (K) at (0,1) {$X''$};  
\path[->,font=\scriptsize]
(B) edge node[above]{$\phi_0$} (C)
(D) edge node[left]{$f$} (A)
(E) edge node[pos=0.5,xshift=-10pt]{$\tilde{f}$} (B)
(F) edge node[pos=0.5,xshift=8pt]{$\hat{f}$} (B)
(G) edge node[left]{$f_{0,n}$} (C)
(H) edge node[right]{$g_{0,n}$} (C)
(G) edge node[above]{$\sigma$} (H)
(I) edge node[above]{$\lambda$} (E)
(I) edge node[above]{$\mu$} (F)
(E) edge node[below]{$\beta$} (K)
(K) edge node[below]{$\alpha$} (D)
(B) edge node[below]{$\beta_0$} (J)
(J) edge node[below]{$\alpha_0$} (A)
(K) edge node[pos=0.5,xshift=-6pt]{$f''$} (J)
(F) edge node[above]{} (G)
(I) edge node[xshift=-5pt, yshift=10pt]{$f^{\sharp}$} (B);
\path[dashed,->] (E) edge (F);
\draw[->] (I) edge[out=180,in=60] node[pos=0.55,yshift=5pt] {$\pi$} (D);
\draw[->] (F) edge[out=40,in=140] node[pos=0.5,yshift=8pt] {$\hat{\phi}$} (H);
\draw[->] (E) edge[out=150,in=30] node[pos=0.5,yshift=8pt] {$\psi$} (D);
\draw[->] (B) edge[out=210,in=-40] node[pos=0.5,yshift=-8pt] {$\psi_0$} (A);
\end{tikzpicture} 
\end{center}  
Let $D^{\sharp}$ and $\hat{D}$ be $\mathbb{Q}$-divisors on $X^{\sharp}$ and $\hat{X}$ respectively, defined by
$$K_{X^{\sharp}}+D^{\sharp}=\pi^*(K_X+D)$$
and
$$K_{\hat{X}}+\hat{D}=\mu_*(K_{X^{\sharp}}+D^{\sharp}).$$
We also define $D_{\rm mod}''$ and $D_{\rm div}''$ on $Y''$ for $(X'', D'')$ as in Definition \ref{defdivmod}, such that 
$$K_{X''}+D''=f''^*(K_{Y''}+D_{\rm mod}''+D_{\rm div}''),$$
and we define $\tilde{D}_{\rm mod}$ and $\tilde{D}_{\rm div}$ on $\tilde{Y}$ in a similar way.
Since $K_{X^{\sharp}}+D^{\sharp}$ is the pullback of some $\mathbb{Q}$-divisor from the base $\tilde{Y}$ we get
$$K_{X^{\sharp}}+D^{\sharp}=\mu^*(K_{\hat{X}}+\hat{D}).$$
Since $D_{\rm div}$ does not depend on the birational modification of the family (see \cite[Remark 7.3]{PS09}), we will define it with respect to $\hat{f}:\hat{X}\to \tilde{Y}$. 

Since $\hat{\phi}$ is generically finite and $\mathcal{D}^*$ is horizontal it follows that $\hat{\phi}^*\mathcal{D}^*$ is horizontal too. Since $\hat{D}^h$ is also horizontal one sees that
\begin{align}\label{Dh}
\hat{D}^h=\hat{\phi}^*\mathcal{D}^*.
\end{align}
From the construction of $\sigma:\overline{\mathcal{U}}_{0,n}\to\mathcal{U}_{0,n}^*$ we see that $(F,\mathcal{D}^*|_F)$ is log canonical for any fiber $F$ of $g_{0,n}:\mathcal{U}_{0,n}^*\to\overline{\mathcal{M}}_{0,n}$. Since the fibers of $\hat{f}:\hat{X}\to\tilde{Y}$ are isomorphic to the fiber of $g_{0,n}$, $(\hat{F},\hat{D}^h|_{\hat{F}})$ is also log canonical, where $\hat{F}$ is any fiber of $\hat{f}$. Let $\hat{D}_i^v$ be a component of $\hat{D}^v$ and $\eta$ the generic point of $\hat{f}(\hat{D}_i^v)$.
Then by inversion of adjunction we know that $(\hat{X}_{\eta},(\hat{D}_i^v+\hat{D}^h)|_{\eta})$ is log canonical. Since the fibers of $\hat{f}$ are reduced, the log canonical threshold of $(\hat{X},\hat{D}; \hat{D}_i^v)$ over the generic point of $\hat{D}_i^v$ is $(1-{\rm coeffi}_{\hat{D}_i^v}\hat{D})$. Hence we get 
$\hat{D}^v=\hat{f}^*\tilde{D}_{\rm div}$. Note that the coefficients of $\hat{D}^v$ can be $>1$.
By definition of $\tilde{D}_{\rm mod}$ we have
\begin{align}\label{Kx+Dh}
K_{\hat{X}}+\hat{D}^h\sim_{\mathbb{Q}}\hat{f}^*(K_{\tilde{Y}}+\tilde{D}_{\rm mod}).
\end{align}
Then we have
\begin{align}\label{equi0}
K_{\hat{X}}+\hat{D}^h-f^*(K_{\tilde{Y}}+\phi_0^*\mathcal{L})=K_{\hat{X}/\tilde{Y}}+\hat{D}^h-\hat{\phi}^*K_{\mathcal{U}_{0,n}^*/\overline{\mathcal{M}}_{0,n}}-\hat{\phi}^*\mathcal{D}^*\sim_{\mathbb{Q}}0,
\end{align}
where the first equality follows from \eqref{Kx+Dh} and Lemma \ref{factuniversalfamilymoduli} (3), and the second relation from \eqref{Dh} and \cite[Chapter 6 Theorem 4.9 (b) and Example 3.18]{Liu02}.

Since $\hat{f}$ has connected fibers, by \eqref{Kx+Dh} and \eqref{equi0} and projection formula we get
\begin{align}\label{Dmod}
\tilde{D}_{\rm mod}\sim_{\mathbb{Q}}\phi_0^*\mathcal{L},
\end{align}
i.e. $\tilde{D}_{\rm mod}$ is semi-ample.

Now since $\alpha_0:Y''\to Y$ is a composition of finite morphisms of degree strictly less than ${\rm char}(k)$ and $\beta_0$ is a birational morphism, by \cite[Theorem 3.2 and Example 3.1]{Ambro99} we get
$$K_{Y''}+D_{\rm div}''\sim_{\mathbb{Q}}\alpha_0^*(K_Y+D_{\rm div})$$
and
$$K_{\tilde{Y}}+\tilde{D}_{\rm div}\sim_{\mathbb{Q}}\beta_0^*(K_{Y''}+D_{\rm div}'').$$
So $\alpha_0^*D_{\rm mod}\sim_{\mathbb{Q}}D_{\rm mod}'',$ and $\beta_0^*D_{\rm mod}''\sim_{\mathbb{Q}}\tilde{D}_{\rm mod}.$
By the projection formula we have
$$D_{\rm mod}''\sim_{\mathbb{Q}}\beta_{0,*}\tilde{D}_{\rm mod}.$$
Then since $\alpha_0$ is finite, 
$$\psi_{0,*}\tilde{D}_{\rm mod}\sim_{\mathbb{Q}}\alpha_{0,*}\beta_{0,*}\tilde{D}_{\rm mod}\sim_{\mathbb{Q}}\alpha_{0,*}D_{\rm mod}''\sim_{\mathbb{Q}}\alpha_{0,*}\alpha_0^*D_{\rm mod}\sim_{\mathbb{Q}}D_{\rm mod}.$$
Here we view the push-forward through $\alpha_0$ as push-forward of cycles. Therefore $D_{\rm mod}$ is $\mathbb{Q}$-linearly equivalent to an effective divisor.

Next we prove the second statement. Since $\alpha$ is finite, by \cite[Corollary 2.42]{Kollar13} we know that $(X'',D'')$ is klt, and as $\beta$, $\lambda$ and $\mu$ are birational we know that $(\hat{X},\hat{D})$ is sub-klt, in particular $\hat{D}^v$ has coefficients $<1$. Since $\hat{f}$ is a $\mathbb{P}^1$ fibration and $(\tilde{Y},\tilde{D}_{\rm div})$ is log smooth we have that $(\tilde{Y},\tilde{D}_{\rm div})$ is sub-klt. By construction $\tilde{D}_{\rm mod}$ is semi-ample, so by \cite[Theorem 1]{Tanaka15b} we know that $(\tilde{Y},\tilde{D}_{\rm div}+\tilde{D}_{\rm mod})$ is sub-klt up to $\mathbb{Q}$-linear equivalence. Then $K_{Y''}+D_{\rm mod}''+D_{\rm div}''\sim_{\mathbb{Q}}\beta_{0,*}(K_{\tilde{Y}}+\tilde{D}_{\rm div}+\tilde{D}_{\rm mod})$ is also sub-klt. Finally using \cite[Corollary 2.42]{Kollar13} again and the fact that $D_{\rm mod}+D_{\rm div}\ge 0$ we get that $(Y,D_{\rm mod}+D_{\rm div})$ is klt.
\end{proof}
\section{Global rational chain connectedness}
In this section we prove the following theorem.
\begin{theorem}\label{globalRCC}
Let $X$ be a projective threefold over an algebraically closed field $k$ of characteristic $p>0$, $f:X\to Y$ a projective surjective morphism from $X$ to a projective variety $Y$ such that $f_*\mathcal{O}_X=\mathcal{O}_Y$. Let $D$ be an effective $\mathbb{Q}$-divisor, and $X_{\overline{\eta}}$ the geometric generic fiber of $f$. Assume that the following conditions hold. 
\begin{enumerate}
\item $(X,D)$ is klt, $-K_X$ is big and $f$-ample, $K_X+D\sim_{\mathbb{Q}}0$ and the general fibers of $f$ are smooth.
\item $\displaystyle p> \frac{2}{\delta}$, where $\delta$ is the minmum non-zero coefficient of $D$.
\item $D=E+f^*L$ where $E$ is an effective $\mathbb{Q}$-Cartier divisor such that $p\nmid {\rm ind}(E)$, $(X_{\overline{\eta}},E|_{X_{\overline{\eta}}})$ is globally $F$-split, and $L$ is a big $\mathbb{Q}$-divisor on $Y$. \label{Fsplitcondition}
\item ${\rm dim}(Y)=1$ or $2$.
\end{enumerate}
Then $X$ is rationally chain connected.
\end{theorem}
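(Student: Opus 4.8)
The plan is to reduce the rational chain connectedness of $X$ to that of the base $Y$ together with the fibres of $f$, and to extract enough rational curves on $Y$ from the anti-bigness of $-K_X$ using the positivity of \cite{Ejiri15} together with the canonical bundle formula of Section \ref{CBFsection}. First I would record the structure of a general fibre $F$ of $f$. Since $-K_X$ is $f$-ample and the general fibres are smooth, $-K_X$ restricts to an ample class which, by adjunction (the normal bundle of a general fibre being trivial), is $-K_F$. Hence when $\dim Y=2$ the general fibre is a smooth curve with $\deg(-K_F)=-K_X\cdot F>0$, i.e. $F\cong\mathbb{P}^1$, and when $\dim Y=1$ it is a smooth del Pezzo surface, in particular rational. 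Thus the general fibres are rationally connected in either case, and after passing to an uncountable field, \cite[Ch. IV Corollary 3.5.2]{Kollar96} upgrades this to the statement that \emph{every} fibre of $f$ is rationally chain connected; note this input does not use the three-dimensional minimal model program, so it remains available when $p<7$.

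The heart of the argument is to prove that $Y$ is rational. The starting point is that $-K_Y$ is big: writing $-K_X\sim_{\mathbb{Q}}D=E+f^*L$ with $L$ big on $Y$ and $(X_{\overline{\eta}},E|_{X_{\overline{\eta}}})$ globally $F$-split, condition (\ref{Fsplitcondition}) together with $p\nmid{\rm ind}(E)$ is exactly what is needed to invoke \cite[Theorem 1.1]{Ejiri15}, whose positivity of the relevant pushforward transfers the bigness of $L$ and the Fano-type behaviour of the fibres to the conclusion that $-K_Y$ is big. When $\dim Y=1$ this already finishes the step, since $-K_Y$ big forces $g(Y)=0$, so $Y\cong\mathbb{P}^1$. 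When $\dim Y=2$, I would apply Theorem \ref{CanBund} to $f$ (after a birational modification of the base making $f$ an equidimensional $\mathbb{P}^1$-fibration smooth away from a divisor $Q$, and taking $M=-K_Y$ so that $K_X+D\sim_{\mathbb{Q}}f^*(K_Y+M)\sim_{\mathbb{Q}}0$; the required numerical hypothesis is precisely $p>2/\delta$). This produces an effective $\overline{D}_{\rm mod}\sim_{\mathbb{Q}}D_{\rm mod}$ with $(Y,D_{\rm div}+\overline{D}_{\rm mod})$ klt and $K_Y+D_{\rm div}+\overline{D}_{\rm mod}\sim_{\mathbb{Q}}0$; in particular $(Y,0)$ is klt and $-K_Y$ is big. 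Running a $K_Y$-minimal model program on the klt \emph{surface} $Y$ (which is valid in every characteristic) terminates at a Mori fibre space $Y'\to T$: if $\dim T=1$ then $Y'\to T$ has $\mathbb{P}^1$-fibres and Lemma \ref{basecurverational} forces $T\cong\mathbb{P}^1$, whence $Y'$ is rational; if $\dim T=0$ then $Y'$ is a klt del Pezzo surface of Picard number one, which is rational. Either way $Y$ is rational.

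Finally I would assemble these facts. When $\dim Y=2$, for any rational curve $C\cong\mathbb{P}^1\subseteq Y$ the preimage $f^{-1}(C)\to C$ is a surface with general fibre $\mathbb{P}^1$, hence birationally ruled over $\mathbb{P}^1$, hence rational and rationally chain connected; since $Y$ is rational its points are joined by chains of such curves, and because the preimages of consecutive links share the (connected, RCC) fibre over the node, the rational chain connectedness of every fibre of $f$ propagates to all of $X$. When $\dim Y=1$ the base $Y\cong\mathbb{P}^1$ is a single rational curve and the preimage trick is vacuous, so here I would instead produce a \emph{horizontal} rational curve meeting every fibre: the generic fibre $X_\eta$ is a separably rationally connected smooth del Pezzo surface over the $C_1$-field $k(t)$, so it carries a $k(t)$-rational point and $f$ admits a section, whose image is a horizontal copy of $\mathbb{P}^1$; together with the RCC of the fibres this yields RCC of $X$.

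The main obstacle is the creation of rational curves on $Y$, i.e. the combined use of \cite[Theorem 1.1]{Ejiri15} and Theorem \ref{CanBund}: in positive characteristic neither the bigness of $-K_Y$ nor the klt structure on $Y$ is formal, and both rest on the global $F$-split hypothesis (\ref{Fsplitcondition}) and on the tame base-change estimate $p>2/\delta$ built into the canonical bundle formula. A secondary difficulty is the $\dim Y=1$ assembly step, where one must genuinely produce a horizontal rational curve connecting the fibres rather than merely pulling chains back from the base; this is where a positive-characteristic section/rational-point input enters and must be handled with care.
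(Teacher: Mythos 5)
Your proposal is correct and follows essentially the same route as the paper: Ejiri's positivity theorem to get $-K_Y$ big, a de Jong--Starr section plus rationally connected fibres when $\dim Y=1$, and the canonical bundle formula of Theorem \ref{CanBund} to put a klt log Calabi--Yau structure on $Y$, deduce rationality of $Y$, and lift chains of rational curves from $Y$ to $X$ when $\dim Y=2$ (the paper packages this last lifting step as Proposition \ref{RCCupstair}). The only cosmetic divergence is that you establish rationality of $Y$ by running a $K_Y$-MMP to a Mori fibre space, whereas the paper reuses the minimal-resolution-to-ruled-surface argument from Case 2 of Theorem \ref{relativeRCC} together with Lemma \ref{basecurverational}; both work.
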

\begin{remark}
The smoothness of the general fibers of $f$ holds in characteristic $p\ge 11$ when $\dim Y=1$ by \cite[Theorem 5.1 (2)]{Hirokado04}, and in characteristic $p\ge 5$ when $\dim Y=2$ by adjunction and a theorem of Tate (cf. \cite[Theorem 5.1]{Liedtke13}).
\end{remark}
\begin{proposition}\label{RCCupstair}
Let $f:X\to Y$ be a projective surjective morphism between normal varieties with $f_*\mathcal{O}_X=\mathcal{O}_Y$. Assume that the following conditions hold.
\begin{enumerate}
\item The general fibers of $f$ are isomorphic to $\mathbb{P}^1$.
\item $Y$ is rationally chain connected.
\end{enumerate}
Then $X$ is rationally chain connected.
\end{proposition}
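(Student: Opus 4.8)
The plan is to reduce to connecting two very general points of $X$ by a chain of rational curves, to lift a connecting chain from $Y$ to a sequence of rational surfaces mapping to $X$, and to glue these along the fibres lying over the nodes of the chain. First I would base change to an uncountable algebraically closed field, exactly as in the proof of Theorem \ref{relativeRCC}, so that by \cite[Ch.\ IV, Corollary 3.5.2]{Kollar96} it suffices to join two very general points $x_1,x_2\in X$ by a chain of rational curves. Let $B\subsetneq Y$ be the closed locus over which $f$ fails to be a smooth $\mathbb{P}^1$-fibration, and put $Y^\circ=Y\setminus B$; by assumption (1) the fibre $f^{-1}(y)$ is a single copy of $\mathbb{P}^1$ for every $y\in Y^\circ$. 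Since $f$ is surjective, $y_i:=f(x_i)$ are very general points, in particular $y_i\in Y^\circ$.

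Next I would produce a \emph{good} chain on the base. Since $Y$ is rationally chain connected, $y_1$ and $y_2$ are joined by a chain of rational curves $C_1,\dots,C_m$ with $y_1\in C_1$, $y_2\in C_m$ and a node $w_j\in C_j\cap C_{j+1}$. The point I must arrange is that $C_j\not\subseteq B$ and $w_j\in Y^\circ$ for every $j$, and this is the crux of the argument. To achieve it I would use a covering family of rational curves realising the rational chain connectedness of $Y$ in the sense of \cite[Ch.\ IV, 3.5]{Kollar96}: a general member of such a family is not contained in $B$ and meets $B$ in only finitely many points, and two general members through a common general point of $Y^\circ$ meet there, off $B$. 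Building the chain from general members joined at general points of $Y^\circ$ then gives all nodes in $Y^\circ$. The statement that a \emph{very general} pair can be connected by such a chain, rather than by an arbitrary one, is exactly the deformation/smoothing input that I expect to be the main obstacle.

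Granting such a chain, for each $i$ let $\nu_i:\mathbb{P}^1\cong\tilde C_i\to C_i\hookrightarrow Y$ be the normalisation, let $X_i$ be the component of $X\times_Y\tilde C_i$ dominating $\tilde C_i$, and let $h_i:S_i\to\tilde C_i$ be a resolution of singularities of its normalisation, with induced $\phi_i:S_i\to X$ satisfying $f\circ\phi_i=\nu_i\circ h_i$. Since $C_i\not\subseteq B$, the general fibre of $h_i$ is a fibre of $f$ over a point of $Y^\circ$, hence $\cong\mathbb{P}^1$, so $S_i$ is a smooth projective surface admitting a fibration over $\mathbb{P}^1$ with rational general fibre and is therefore a rational surface. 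I would then use the classical fact that on a smooth rational surface any two closed points are joined by a chain of rational curves (connect their images in a minimal model $\mathbb{P}^2$ or $\mathbb{F}_n$ by lines, fibres and sections, and lift through the blow-ups using the exceptional curves). Moreover, because $w_j\in Y^\circ$, the fibre of $h_i$ over the point of $\tilde C_i$ above $w_j$ maps isomorphically onto the irreducible rational curve $f^{-1}(w_j)$; hence $Z_i:=\phi_i(S_i)$ contains $f^{-1}(w_{i-1})$ and $f^{-1}(w_i)$ as honest copies of $\mathbb{P}^1$, and consecutive images $Z_j,Z_{j+1}$ share the entire rational curve $f^{-1}(w_j)$.

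Finally I would assemble the chain in $X$. The point $x_1$ lies on $f^{-1}(y_1)\cong\mathbb{P}^1\subseteq Z_1$, so $x_1=\phi_1(s_1)$ for some $s_1\in S_1$; joining $s_1$ inside the rational surface $S_1$ to a point of the fibre of $h_1$ over $w_1$ and pushing forward by $\phi_1$ yields a chain of rational curves in $X$ from $x_1$ to a point of $f^{-1}(w_1)$. Since $f^{-1}(w_1)\subseteq Z_2$, I would continue the same way inside $S_2$ from $f^{-1}(w_1)$ to $f^{-1}(w_2)$, and so on, ending inside $S_m$ with a chain from $f^{-1}(w_{m-1})$ to $x_2\in f^{-1}(y_2)$. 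Concatenating these produces a chain of rational curves in $X$ joining $x_1$ and $x_2$, so $X$ is rationally chain connected. As flagged above, the one delicate point—everything else being formal once it is in place—is the general-position statement that the connecting chain on the rationally chain connected base $Y$ can be chosen with all its nodes in the open locus $Y^\circ$ over which the fibres are irreducible $\mathbb{P}^1$'s.
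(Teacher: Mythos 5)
Your overall architecture --- lift a connecting chain on $Y$ to ruled surfaces over the normalized components and glue along fibres over the nodes --- is the same as the paper's, and your way of traversing each surface $S_i$ (rationality via Tsen's theorem plus chain-connectedness of smooth rational surfaces) is a legitimate substitute for the paper's device, which is the section of $\overline{S_i}\to\overline{C_i}$ supplied by \cite{dJS03}. However, the step you yourself flag as the crux is a genuine gap, and your proposed fix does not work. Rational chain connectedness of $Y$ gives you no control over where the nodes of a connecting chain lie: the chains are fibres of whatever family witnesses RCC-ness, and there are normal RCC varieties on which every chain joining two general points is forced through a fixed special point --- for instance a cone over an elliptic curve, where the only rational curves are the rulings and any two of them meet only at the vertex. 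If such a forced point lies in your bad locus $B$, no general-position argument will place the nodes in $Y^\circ$, so the hypothesis on which your gluing step rests can simply fail.

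The paper's proof shows that this requirement is unnecessary. One does not avoid the bad locus; one observes instead that $\overline{S_i}\to\overline{C_i}$ is a projective family of curves whose general fibre is $\mathbb{P}^1$, so by \cite[Lemma 3.7]{Debarre01} \emph{every} fibre of this family --- in particular the (connected, possibly reducible and degenerate) fibre over a preimage of a node $w_j$ --- is a union of rational curves. The section from \cite{dJS03} therefore meets, over each node, a connected configuration of rational curves contained in $f^{-1}(w_j)$, and consecutive surfaces are joined through these degenerate fibres rather than through an honest $\mathbb{P}^1$. This specialization statement (rational curves degenerate to unions of rational curves) is exactly the ingredient your sketch is missing; once it is in place, the insistence that $w_j\in Y^\circ$ can be dropped and the rest of your argument goes through. (Both your argument and the paper's still tacitly assume each component $C_i$ is not contained in the locus of bad fibres, so that the general fibre of $\overline{S_i}\to\overline{C_i}$ really is $\mathbb{P}^1$; but the essential difference, and the essential gap in your proposal, is at the nodes.)
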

\begin{proof}
The proof is essentially the same as \cite[Lemma 3.12 and Proposition 3.13]{GLPSTZ15}. We take two general points $x_1$, $x_2\in X$ and let $y_1=f(x_1)$, $y_2=f(x_2)$, by construction $f^{-1}(y_1)\cong f^{-1}(y_2)\cong\mathbb{P}^1$. By assumption $y_1$ and $y_2$ can be connected by a chain of rational curves, say $C_1,C_2,...,C_n$. Let $\overline{C_i}\to C_i$ be the normalization for each $C_i$, $S_i:=f^{-1}(C_i)$, $\overline{S_i}:=S_i\times_{\overline{C_i}}C_i$ and $g_i:\overline{S_i}\to S_i$ the induced morphisms. Now the morphism $\overline{S_i}\to\overline{C_i}$ is a flat projective morphism whose general fibers are $\mathbb{P}^1$, by \cite[Theorem]{dJS03} it has a section which we denote by $\tilde{C_i}$. Then $x_1$ and $x_2$ is connected by $f^{-1}(y_1)$, $f^{-1}(y_2)$, $g_i(\tilde{C_i})$ and the fibers of $f$ over the intersection points of $\{C_i\}$, which is a union of rational curves by \cite[Lemma 3.7]{Debarre01}.
\end{proof}
\begin{proof}[Proof of Theorem \ref{globalRCC}]
We first prove the following lemma.
\begin{lemma}\label{-KYample}
Under the condition of Theorem \ref{globalRCC}, $-K_Y$ is big.
\end{lemma}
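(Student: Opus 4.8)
We must show that $-K_Y$ is big, where $Y$ is the base of the fibration $f:X\to Y$ with $\dim Y\in\{1,2\}$ and all the hypotheses (1)–(4) of Theorem~\ref{globalRCC} in force.

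**The plan.** The strategy is to transport the positivity of $-K_X$ down to $Y$ via a canonical-bundle-type argument, using the global $F$-split hypothesis (3) as the mechanism that guarantees the relevant ``moduli'' contribution stays positive. Concretely, I would split into the two cases $\dim Y=1$ and $\dim Y=2$, since the tools differ.

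**Case $\dim Y=1$.** Here $Y$ is a curve, and bigness of $-K_Y$ is equivalent to $Y\cong\mathbb P^1$ (equivalently $\deg(-K_Y)>0$). The natural route is to produce the positivity of the direct image of an appropriate power of $-K_{X/Y}$. Writing $K_X+D\sim_{\mathbb Q}0$ and $D=E+f^*L$ with $L$ big on $Y$, one gets $-K_{X/Y}\sim_{\mathbb Q}E+f^*(L+K_Y)$ up to the relative canonical. The globally $F$-split condition on the geometric generic fibre $(X_{\overline\eta},E|_{X_{\overline\eta}})$ is exactly the input needed to invoke \cite[Theorem 1.1]{Ejiri15} (the positivity result of Ejiri alluded to in the introduction, where the author explicitly says this hypothesis is used to deduce $-K_Y$ is big). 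I would apply Ejiri's theorem to conclude that a suitable direct-image sheaf $f_*\big(\mathcal O_X(m(-K_{X/Y}-E))\big)$, or equivalently the sheaf governing $S^0$-sections, is weakly positive; combined with the bigness of $L$ this forces $\deg(-K_Y)>0$. The role of hypothesis (2), $p>2/\delta$, is to keep the horizontal coefficients tame so the $F$-splitting is preserved under the relevant base changes.

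**Case $\dim Y=2$.** Here I would invoke the canonical bundle formula proved in this paper, Theorem~\ref{CanBund}. After possibly a log resolution and running a relative MMP so that the general fibre of $f$ is $\mathbb P^1$ (using $-K_X$ relatively big to land in a Mori-fibre-space situation as in the proof of Theorem~\ref{relativeRCC}), one writes $K_X+D\sim_{\mathbb Q}f^*(K_Y+D_{\rm div}+D_{\rm mod})$. Since $K_X+D\sim_{\mathbb Q}0$, this gives $K_Y+D_{\rm div}+D_{\rm mod}\sim_{\mathbb Q}0$, hence
\[
-K_Y\sim_{\mathbb Q}D_{\rm div}+D_{\rm mod}.
\]
By Theorem~\ref{CanBund}, $D_{\rm mod}$ is $\mathbb Q$-linearly equivalent to an effective divisor, and $D_{\rm div}\ge 0$ by construction, so $-K_Y$ is $\mathbb Q$-linearly equivalent to an effective divisor. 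To upgrade effectivity to bigness I would extract the big part of $D$: since $D=E+f^*L$ with $L$ big, the $f^*L$ contribution feeds into $D_{\rm mod}$ (via $M=-K_Y$ and the definition $D_{\rm mod}=M-K_Y-D_{\rm div}$), and the global $F$-split hypothesis guarantees, through the semi-ampleness of $\tilde D_{\rm mod}$ established in the proof of Theorem~\ref{CanBund}, that the moduli part retains the bigness coming from $L$. Thus $-K_Y\sim_{\mathbb Q}(\text{effective})+(\text{big})$ is big.

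**Main obstacle.** The delicate point is the second case: making rigorous that the bigness of $L$ survives inside $D_{\rm mod}$ rather than being absorbed or cancelled by $D_{\rm div}$. The canonical bundle formula only directly yields \emph{effectivity} of $D_{\rm mod}$, so the crux is isolating a big summand. I expect the cleanest way is to separate $L$ out before applying Theorem~\ref{CanBund}: apply the formula to $(X,E)$ to handle the fibre-theoretic (moduli) contribution—this is where $F$-splitness enters—and then add back $f^*L$ so that $-K_Y\sim_{\mathbb Q}(D_{\rm div}+\overline{D}_{\rm mod})+L$ with the first parenthesis effective and $L$ big. In the $\dim Y=1$ case the analogous subtlety is ensuring Ejiri's positivity applies to the geometric generic fibre and descends to the closed fibres, for which the tame-ramification bound from hypothesis (2) is exactly what is required.
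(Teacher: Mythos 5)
Your proposal is essentially correct, but it is structured differently from the paper's proof. The paper does not split into cases at all: it applies \cite[Theorem 1.1]{Ejiri15} once, in either dimension, to conclude that $f_*\mathcal{O}_X(am(K_{X/Y}+E))\cong\mathcal{O}_Y(-am(K_Y+L))$ is weakly positive (the global $F$-splitting of $(X_{\overline{\eta}},E|_{X_{\overline{\eta}}})$ supplies the input $S^0=H^0$ on the geometric generic fibre), hence by Lemma \ref{WPimpliesnef} that $-K_Y-L$ is nef, and then $-K_Y=(-K_Y-L)+L$ is big because nef plus big is big. Your case $\dim Y=1$ is this same argument, modulo a sign slip: the relevant sheaf is $f_*\mathcal{O}_X(m(K_{X/Y}+E))$, not $f_*\mathcal{O}_X(m(-K_{X/Y}-E))$. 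Your case $\dim Y=2$ takes a genuinely different route through Theorem \ref{CanBund}, and the fix you propose for the obstacle you correctly identify --- apply the canonical bundle formula to $(X,E)$ rather than to $(X,D)$, so that $K_X+E\sim_{\mathbb{Q}}f^*(-L)$ gives $-K_Y\sim_{\mathbb{Q}}E_{\rm div}+E_{\rm mod}+L$ with the first two terms effective (up to $\mathbb{Q}$-linear equivalence) and $L$ big --- does close the gap; the hypotheses of Theorem \ref{CanBund} hold for $(X,E)$ since $E^h=D^h$ and $E\le D$. What this alternative buys is that, for a surface base, bigness of $-K_Y$ is obtained without the $F$-splitting hypothesis, since Theorem \ref{CanBund} never uses it; what it costs is invoking the full machinery of Section \ref{CBFsection} where two lines of weak positivity suffice. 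One misattribution to correct: the semi-ampleness of $\tilde{D}_{\rm mod}$ in the proof of Theorem \ref{CanBund} comes from the moduli space $\overline{\mathcal{M}}_{0,n}$ via Lemma \ref{factuniversalfamilymoduli}, not from global $F$-splitting.
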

\begin{proof} 
By assumption $m(K_{X_{\overline{\eta}}}+E|_{X_{\overline{\eta}}})\sim_{\mathbb{Q}} 0$ for sufficiently large and divisible $m$, in particular the $k(\overline{\eta})$-algebra $\bigoplus_{m\ge 0}H^0(am(K_{X_{\overline{\eta}}}+E|_{X_{\overline{\eta}}}))$ is finitely generated. On the other hand since $(X_{\overline{\eta}}, E|_{X_{\overline{\eta}}})$ is globally $F$-split we have that
$$S^0(X_{\overline{\eta}},\sigma(X_{\overline{\eta}}, E|_{X_{\overline{\eta}}})\otimes\mathcal{O}_{X_{\overline{\eta}}}(m(K_{X_{\overline{\eta}}}+E|_{X_{\overline{\eta}}})))=H^0(X_{\overline{\eta}}, \mathcal{O}_{X_{\overline{\eta}}}(m(K_{X_{\overline{\eta}}}+E|_{X_{\overline{\eta}}}))).$$ 
Here we would like to mention that for a line bundle $M$ and a $\mathbb{Q}$-Cartier divisor $\Delta$, the notation $S^0(X,\Delta,M)$ is the same as the standard notation $S^0(X,\sigma(X,\Delta)\otimes M)$ (cf. \cite[between Lemma 2.2 and Proposition 2.3]{HX15}. Therefore by \cite[Theorem 1.1]{Ejiri15} we know that 
$$f_*\mathcal{O}_X(am(K_{X/Y}+E))\cong f_*\mathcal{O}_X(f^*(-am(K_Y+L)))=\mathcal{O}_Y(-am(K_Y-L))$$
is weakly positive for $m\gg 0$. By Lemma \ref{WPimpliesnef}, $-K_Y-L$ is nef, so $-K_Y$ is big.
\end{proof}
Next we consider the following two cases. \\  \\
\emph{Case 1: $Y$ is 1-dimensional.} 

After possibly taking the normalization of $Y$ we can assume that $Y$ is smooth. Then Lemma \ref{-KYample} implies that $g(Y)=0$, i.e. $Y\cong\mathbb{P}^1$. 
Let $F$ be a general fiber of $f$. By assumption $F$ is smooth and $K_F$ is anti-ample, hence $F$ is separably rationally connected. By \cite[Theorem]{dJS03} we know that $f$ has a section which we denote by $s$. Then $s(Y)$ is a rational curve in $X$ which dominates $Y$. Therefore we get that $X$ is rationally chain connected. \\ \\
\emph{Case 2: $Y$ is 2-dimensional.}


By assumption, a general fiber of $f$ is isomorphic to $\mathbb{P}^1$. Now by Lemma \ref{-KYample} we know that $-K_Y$ is big. On the other hand since $(X,D)$ is klt, by Theorem \ref{CanBund} there is a nonzero effective $\mathbb{Q}$-Cartier divisor $M$ on $Y$ such that $K_Y+M\sim_{\mathbb{Q}} 0$ and $(Y,M)$ is klt. Then by the proof of \emph{Case 2} of Theorem \ref{relativeRCC} we know that $Y$ is rational. Finally by Proposition \ref{RCCupstair} we get that $X$ is rationally chain connected.
\end{proof}
\bibliographystyle{alpha}
\bibliography{P}  
\end{document}